\numberwithin{equation}{section}
\theoremstyle{plain}
\newtheorem{Th}{Theorem}[section]
\newtheorem{Lemma}[Th]{Lemma}
\newtheorem{Cor}[Th]{Corollary}
 \theoremstyle{definition}
\newtheorem{Def}[Th]{Definition}
\newtheorem{Rem}[Th]{Remark}
\newtheorem{?}[Th]{Problem}
\newtheorem{Ex}[Th]{Example}
\begin{document}

\title{Counting degree-constrained subgraphs and orientations}

\author[M. Borb\'enyi]{M\'arton Borb\'enyi}

\address{ELTE: E\"{o}tv\"{o}s Lor\'{a}nd University \\ H-1117 Budapest
\\ P\'{a}zm\'{a}ny P\'{e}ter s\'{e}t\'{a}ny 1/C}

\email{marton.borbenyi@gmail.com}

\author[P. Csikv\'ari]{P\'{e}ter Csikv\'{a}ri}

\address{MTA-ELTE Geometric and Algebraic Combinatorics Research Group \& ELTE: E\"{o}tv\"{o}s Lor\'{a}nd University \\ Mathematics Institute, Department of Computer 
Science \\ H-1117 Budapest
\\ P\'{a}zm\'{a}ny P\'{e}ter s\'{e}t\'{a}ny 1/C}

\email{peter.csikvari@gmail.com}

\thanks{The first author is partially supported be the New National Excellence Program (\'UNKP), and when the project started he was partially supported by the EFOP program (EFOP-3.6.3-VEKOP-16-2017-00002). The second author  is supported by the 
 Marie Sk\l{}odowska-Curie Individual Fellowship grant no. 747430, and before that grant he was partially supported by the
Hungarian National Research, Development and Innovation Office, NKFIH grant K109684 and Slovenian-Hungarian grant NN114614, and by the ERC Consolidator Grant  648017.
}

 \subjclass[2010]{Primary: 05C30. Secondary: 05C31, 05C70}

 \keywords{Eulerian orientations, half graphs, gauge transformations} 

\begin{abstract} The goal of this  paper is to advertise the method of gauge transformations (aka holographic reduction, reparametrization) that is well-known in statistical physics and computer science, but less known in combinatorics. As an application of it we give a new proof of a theorem of A. Schrijver asserting that the number of Eulerian orientations of a $d$--regular graph on $n$ vertices with even $d$ is at least 
$\left(\frac{\binom{d}{d/2}}{2^{d/2}}\right)^n$. We also show that a $d$--regular graph with even $d$ has always at least as many Eulerian orientations as $(d/2)$--regular subgraphs. 
\end{abstract}

\maketitle

\section{Introduction}

In this paper we advertise a method that is well-known in statistical physics and computer science, but is less known is combinatorics. Roughly speaking this method enables one to transform a counting problem to another one that might be easier to analyse. In computer science this method was introduced by L. Valiant under the name holographic reduction \cite{valiant2008holographic,valiant2006accidental,valiant2002quantum,valiant2002expressiveness}. In statistical physics it was developed by M. Chertkov and V. Chernyak under the name gauge transformation \cite{chertkov2006loop2,chertkov2006loop1}. Wainwright, Jaakola, Willsky had a related idea under the name reparametrization \cite{wainwright2003tree}, but it is not easy to see the connection. In the different cases the scope was slightly different, L. Valiant used it as a reduction method for computational complexity of counting problems. This line of research was extended in a series of papers of Jin-Yi Cai and his coauthors, see Jin-Yi Cai's book \cite{cai2017complexity} and the papers \cite{cai2007holographic,cai2008basis,cai2007symmetric,cai2008holographic, cai2011holographic, cailu2008holographic} and references therein. M. Chertkov and V. Chernyak \cite{chertkov2006loop2,chertkov2006loop1} studied the so-called Bethe--approximation through gauge transformations. In this paper we mainly adopt the notations of gauge transformations, but we will give pointers to the other papers too and we also give the alternative names of certain concepts. 

This paper is written primarily for combinatorists, so the main objects of this paper will be orientations and subgraphs. From a mathematical point of view this method can be considered as an application of invariant theory to graph theory, but no knowledge of invariant theory is assumed in this paper. Below we collected these applications. In each case we give a theorem for regular graphs and its generalization for non-regular graphs. To keep the arguments simple we will mainly prove the theorems for regular graphs, and then we explain how to modify the arguments to be valid for non-regular graphs. We will also give various examples.
\bigskip

\subsection{Applications in graph theory} Recall that a graph is called Eulerian if all degrees are even. It is often assumed in the literature that an Eulerian graph $G$ is also connected too, but in this paper we do not require connectedness. An orientation of an Eulerian graph is called an Eulerian orientation if the in-degree and out-degree is the same at each vertex. Counting Eulerian orientations has triggered considerable interest both in combinatorics, computer science and statistical physics. Probably, the best known result is due to Lieb \cite{lieb2004residual} who determined the asymptotic number of Eulerian orientations of large grid graphs. Welsh \cite{welsh1999tutte} observed that for a $4$--regular graph the Tutte-polynomial evaluation $|T_G(0,-2)|$  is exactly the number of Eulerian orientations since nowhere-zero $Z_3$-flows and Eulerian orientations are in one-to-one correspondence for $4$--regular graphs. Mihail and Winkler \cite{mihail1992number} gave an efficient randomized algorithm to sample and approximately count Eulerian orientations. 

Our first result will be a new proof of a  lower bound on the number of Eulerian  orientations due to A. Schrijver. First we give it for non-regular graphs, then for regular graphs.

\begin{Th}[A. Schrijver \cite{schrijver1983bounds}] \label{Schrijver-non-regular}
Let $G$ be a  graph on $n$ vertices with degree sequence $d_1,d_2,\dots ,d_n$, where $d_k$ are even for all $k$. Let $\varepsilon(G)$ denote the number of Eulerian orientations of the graph $G$. Then
$$\varepsilon(G)\geq \prod_{k=1}^n\frac{\binom{d_k}{d_k/2}}{2^{d_k/2}}.$$
\end{Th}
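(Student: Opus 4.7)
My plan is to represent $\varepsilon(G)$ as a factor-graph partition function and then apply a gauge transformation that expresses it as a sum whose distinguished (empty) term is precisely the claimed lower bound and whose remaining terms sum to a nonnegative quantity.

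\emph{Encoding.} Place a variable $\sigma_{e,v}\in\{0,1\}$ on every half-edge $(e,v)$, interpreted as ``$e$ points out of $v$'', with consistency factor $g_e(a,b)=[a+b=1]$ on each edge and Euler factor $f_v(\mathbf{x})=[\,|\mathbf{x}|=d_v/2\,]$ at each vertex. Then
$$\varepsilon(G)=\sum_{\sigma}\prod_{e}g_e(\sigma|_e)\prod_{v}f_v(\sigma|_v).$$

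\emph{Gauge transformation.} On each edge I insert $T\,T^{-1}$ for an invertible $2\times 2$ matrix $T$ and absorb the factors into the vertex tensors. Guided by the $0\leftrightarrow 1$ symmetry of $f_v$ (equivalently, the uniform BP fixed point), I would try the Hadamard gauge $T=\tfrac{1}{\sqrt{2}}\bigl(\begin{smallmatrix}1&1\\1&-1\end{smallmatrix}\bigr)$; pushing the swap inside $g_e$ through $T$ and computing the local Fourier sum at each vertex leads to
$$\varepsilon(G)=\frac{1}{2^{|E|}}\sum_{S\subseteq E}(-1)^{|S|}\prod_{v}K_{d_v/2}\!\bigl(k_v(S);d_v\bigr),$$
where $k_v(S)=|S\cap E_v|$ and $K_{d/2}(k;d)=\sum_{j}(-1)^j\binom{k}{j}\binom{d-k}{d/2-j}$ is a Krawtchouk polynomial. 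The empty subset $S=\emptyset$ contributes exactly $\prod_{v}\binom{d_v}{d_v/2}/2^{d_v/2}$, which is the right-hand side of the theorem.

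\emph{Main obstacle.} What remains is to show that the sum over nonempty $S$ is nonnegative. Individual Krawtchouk values can be negative, so one cannot argue pointwise. A natural attack is either (i) to recast the alternating sum as counting certain combinatorial objects on $G$ (likely the ``half graphs'' named in the abstract) for which each contribution becomes a nonnegative product of local counts, or (ii) to perform a second gauge change bringing the sum into the Chertkov--Chernyak loop-calculus form and verify that every loop contribution associated with $f_v$ is nonnegative. This nonnegativity is, in my view, the crux of the proof; once it is in place, dropping every term except $S=\emptyset$ yields Schrijver's bound immediately. The generalization to non-regular degree sequences requires nothing new: the encoding already handles variable degrees vertex-by-vertex, so the same gauge and same Krawtchouk identity apply verbatim.
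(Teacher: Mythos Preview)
Your encoding and Hadamard gauge are correct, and the Krawtchouk expansion you wrote down is in fact exactly the paper's formula $\varepsilon(G)=F_G(\underline{s})$ in disguise. The gap you flag as the ``main obstacle'' is real---as written your proposal is incomplete---but it closes in one line, and that line is precisely the paper's argument.

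Here is the missing step. Recall that $K_{d/2}(k;d)$ is the coefficient of $x^k$ in $(1-x)^k(1+x)^{d-k}$ after the standard rewriting, and in particular $K_{d/2}(k;d)=0$ whenever $k$ is odd (equivalently: the middle Krawtchouk polynomial is even, since $(1-x^2)^{d/2}$ has only even powers). Hence only \emph{Eulerian} subsets $S$ (those with every $k_v(S)$ even) survive in your sum. For such $S$ one has $|S|=\tfrac12\sum_v k_v(S)$, so the global sign distributes over vertices:
\[
(-1)^{|S|}\prod_v K_{d_v/2}(k_v(S);d_v)=\prod_v (-1)^{k_v(S)/2}K_{d_v/2}(k_v(S);d_v).
\]
Now a direct computation (or the identity for the middle Krawtchouk value) gives, for even $k$,
\[
(-1)^{k/2}K_{d/2}(k;d)=\frac{\binom{d}{d/2}\binom{d/2}{k/2}}{\binom{d}{k}}\ge 0.
\]
So every term in your sum is nonnegative, and dropping all but $S=\emptyset$ (and $S=E$, which gives the same value and the paper's extra factor of $2$) yields Schrijver's bound.

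This is exactly the paper's proof: their vector $\underline{s}$ with $s_k=\frac{\binom{d}{d/2}\binom{d/2}{k/2}}{2^{d/2}\binom{d}{k}}$ is nothing other than $2^{-d/2}(-1)^{k/2}K_{d/2}(k;d)$, and their identity $\varepsilon(G)=F_G(\underline{s})=\sum_{A\subseteq E}\prod_v s_{d_A(v)}$ is your Krawtchouk expansion with the sign absorbed. No second gauge or loop-calculus repackaging is needed; the nonnegativity is already termwise once you split the $(-1)^{|S|}$.
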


\begin{Cor}[A. Schrijver \cite{schrijver1983bounds}] \label{Schrijver-regular}
Let $G$ be a $d$--regular graph on $n$ vertices, where $d$ is even. Let $\varepsilon(G)$ denote the number of Eulerian orientations of the graph $G$. Then
$$\varepsilon(G)\geq \left(\frac{\binom{d}{d/2}}{2^{d/2}}\right)^n.$$
\end{Cor}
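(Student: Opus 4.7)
The plan is to prove the corollary directly via the gauge transformation framework advertised in the paper. First I would express $\varepsilon(G)$ as a contraction of local tensors: put on each edge $e$ a variable $\sigma_e\in\{0,1\}$ encoding its orientation, and on each vertex $v$ the local factor $f_v(\sigma|_{\delta(v)})$ equal to $1$ when exactly $d/2$ of the incident variables take the value $1$ and $0$ otherwise, so that
$$\varepsilon(G)=\sum_{\sigma\in\{0,1\}^{E(G)}}\prod_{v\in V(G)}f_v(\sigma|_{\delta(v)}).$$

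Next, perform a gauge transformation: choose an invertible $2\times 2$ matrix $T$ and, for each edge $e$, insert the identity $T\cdot T^{-1}=I$, routing $T$ to one endpoint of $e$ and $T^{-1}$ to the other. This leaves the total sum unchanged but replaces each $f_v$ by a transformed tensor $\widetilde{f}_v$ obtained by applying $T$ to each of its $d$ legs. Because $f_v$ is invariant under permutations of its legs, $\widetilde{f}_v(\tau)$ depends only on the Hamming weight of $\tau$, and an elementary computation yields
$$\widetilde{f}_v(0,\ldots,0)=\binom{d}{d/2}(T_{00}T_{10})^{d/2},$$
so imposing the single normalization $T_{00}T_{10}=1/2$ makes this entry equal exactly $\binom{d}{d/2}/2^{d/2}$.

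Provided one can arrange additionally that \emph{every} entry of $\widetilde{f}_v$ is nonnegative, the gauge-transformed representation
$$\varepsilon(G)=\sum_{\tau\in\{0,1\}^{E(G)}}\prod_{v\in V(G)}\widetilde{f}_v(\tau|_{\delta(v)})$$
is a sum of nonnegative reals, so dropping everything except the single term $\tau\equiv 0$ already gives
$$\varepsilon(G)\ \geq\ \prod_{v\in V(G)}\widetilde{f}_v(0,\ldots,0)\ =\ \left(\frac{\binom{d}{d/2}}{2^{d/2}}\right)^{n},$$
which is the statement of the corollary.

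The main obstacle is producing such a $T$: the normalization $T_{00}T_{10}=1/2$ is trivial to satisfy, but the entrywise positivity of the permutation-symmetric tensor $\widetilde{f}_v$ is subtle. A naive Hadamard choice achieves the normalization but one can check that it already yields a negative value on weight-$2$ inputs. The appropriate idea is to exploit the identity $\binom{d}{d/2}=\sum_{k=0}^{d/2}\binom{d/2}{k}^{2}$, which writes the local Eulerian constraint as a sum of squares of ``half-vertex'' tensors (this is the half-graph construction hinted at in the keywords). Taking $T$ to be a complex matrix paired on the two ends of each edge against its complex conjugate $\overline{T}$ then turns each entry of $\widetilde{f}_v$ into a sum of squared moduli, and thus into a nonnegative real. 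Once this Hermitian gauge is in hand, the one-term lower bound above closes the proof.
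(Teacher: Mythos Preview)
Your very first displayed identity is wrong, and this is a genuine gap rather than a cosmetic slip. With the symmetric local factor $f_v$ equal to the indicator of ``exactly $d/2$ of the incident bits are $1$'', the quantity
\[
\sum_{\sigma\in\{0,1\}^{E(G)}}\prod_{v\in V(G)}f_v(\sigma|_{\delta(v)})
\]
counts the number of half-graphs $h(G)$ (i.e.\ $d/2$-regular spanning subgraphs), not the number of Eulerian orientations $\varepsilon(G)$. An orientation assigns to an edge $e=\{u,v\}$ a direction that reads as ``out'' at one endpoint and ``in'' at the other; once you collapse this to a single bit $\sigma_e$ shared by both endpoints, the Eulerian constraint at $v$ is no longer the symmetric predicate ``half the $\sigma_e$ equal $1$'' but rather a predicate that depends on a chosen reference orientation and distinguishes the edges incident to $v$ according to their reference direction. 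The paper itself proves $\varepsilon(G)\ge h(G)$ with strict inequality for every non-bipartite $G$, so the two partition functions are genuinely different.

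Because of this, even if the rest of your argument went through cleanly it would at best bound $h(G)$, not $\varepsilon(G)$. And the rest does not go through cleanly either: once you route $T$ to one end of each edge and $T^{-1}$ to the other, the transformed vertex tensor $\widetilde f_v$ has $T$ acting on some legs and $(T^{-1})^{T}$ on the others, so it is no longer permutation-symmetric and your formula $\widetilde f_v(0,\dots,0)=\binom{d}{d/2}(T_{00}T_{10})^{d/2}$ is unjustified. The closing paragraph about a ``Hermitian gauge'' based on the Vandermonde identity remains a sketch; no concrete $T$ is produced and no positivity is verified.

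For comparison, the paper's proof runs in the opposite direction and sidesteps all of these issues. It starts from the subgraph-side polynomial $F_G(x_0,\dots,x_d)$, applies a gauge with \emph{asymmetric} complex matrices $G_1,G_{-1}$ on the two ends of each edge (the asymmetry is precisely what encodes the in/out distinction you lost), and shows that a specific nonnegative vector $\underline{s}$ with $s_0=s_d=\binom{d}{d/2}/2^{d/2}$ satisfies $F_G(\underline{s})=\varepsilon(G)$. Nonnegativity of the $s_k$ then gives $\varepsilon(G)\ge s_0^n+s_d^n$ by keeping only the empty and full subgraphs.
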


In our proof of Theorems~\ref{Schrijver-non-regular} and Corollary~\ref{Schrijver-regular} we will improve on the lower bounds by a multiplicative factor $2$. Practically, we will give a formula for the number of Eulerian orientations with only non-negative terms and two main terms corresponding exactly to Schrijver's lower bound.

Next we compare the number of Eulerian orientations with the number of certain subgraphs.

\begin{Def}
Let $G$ be an Eulerian graph. A graph $H$ is a half-graph of $G$ if it is a spanning subgraph of $G$, and $d_H(v)=d_G(v)/2$ for all vertex $v$.
\end{Def}

\begin{Th} \label{orientations and half-graphs, non-regular}
Let $G$ be an Eulerian graph. Let $\varepsilon(G)$ denote the number of Eulerian orientations of the graph $G$, and let $h(G)$ denote the number of half-graphs of $G$. Then $\varepsilon(G)\geq h(G)$. Equality holds if and only if $G$ is bipartite.
\end{Th}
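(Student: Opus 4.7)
Both $\varepsilon(G)$ and $h(G)$ are tensor-network contractions that share the common vertex tensor $f_v(x)=\mathbf{1}[|x|=d_v/2]$ (with $|x|$ the Hamming weight) and differ only in the edge tensor: the orientation count uses the swap matrix $\sigma_X=\bigl(\begin{smallmatrix}0&1\\1&0\end{smallmatrix}\bigr)$ on each edge (the ``out'' indicator flips between the two endpoints), while the half-graph count uses the identity $I$ (the edge is in $H$ or not, seen consistently from both endpoints). The plan is to apply the Hadamard gauge $H=\tfrac{1}{\sqrt 2}\bigl(\begin{smallmatrix}1&1\\1&-1\end{smallmatrix}\bigr)$ simultaneously at every half-edge. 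This transforms $I\mapsto HIH=I$ and $\sigma_X\mapsto H\sigma_X H=\sigma_Z=\operatorname{diag}(1,-1)$, and both vertex tensors are replaced by the common function
\[\tilde f_v(y) \;=\; \frac{1}{2^{d_v/2}}\,[z^{d_v/2}](1-z)^{|y|}(1+z)^{d_v-|y|}.\]

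The crucial step is the sign analysis of $\tilde f_v$. The palindrome identity $z^{d_v}(1-1/z)^\ell(1+1/z)^{d_v-\ell}=(-1)^\ell(1-z)^\ell(1+z)^{d_v-\ell}$ forces the middle coefficient, and hence $\tilde f_v(y)$, to vanish whenever $\ell=|y|$ is odd. For $\ell=2m$ even, substituting $z=e^{i\alpha}$ together with $1-e^{i\alpha}=-2i\sin(\alpha/2)e^{i\alpha/2}$ and $1+e^{i\alpha}=2\cos(\alpha/2)e^{i\alpha/2}$ yields
\[\tilde f_v(y) \;=\; \frac{(-1)^m\,2^{d_v/2}}{2\pi}\int_0^{2\pi}\sin^{2m}(\alpha/2)\cos^{d_v-2m}(\alpha/2)\,d\alpha,\]
an integral with non-negative integrand (both exponents are even, since $d_v$ is even), hence strictly positive. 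Thus $\tilde f_v(y)=(-1)^{|y|/2}w_v(y)$ with $w_v(y)>0$ for $|y|$ even, and $\tilde f_v(y)=0$ for $|y|$ odd.

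After the edge contractions (the identity forces $\tau_e^u=\tau_e^v=:\tau_e$; $\sigma_Z$ does the same and contributes $(-1)^{\tau_e}$), both quantities become
\[h(G)=\sum_{\tau\in\{0,1\}^E}\prod_v\tilde f_v(\tau|_v), \qquad \varepsilon(G)=\sum_{\tau\in\{0,1\}^E}(-1)^{|\tau|}\prod_v\tilde f_v(\tau|_v).\]
The vanishing of $\tilde f_v$ on odd-weight arguments restricts both sums to the $\mathbb{F}_2$-cycle space $\mathcal C(G)=\{\tau:|\tau|_v|\text{ is even for every }v\}$. For $\tau\in\mathcal C(G)$, writing $m_v:=|\tau|_v|/2$ and using the handshake identity $\sum_v m_v=|\tau|$, we obtain $\prod_v\tilde f_v(\tau|_v)=(-1)^{|\tau|}w(\tau)$ with $w(\tau):=\prod_v w_v(\tau|_v)>0$. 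Hence
\[h(G)=\sum_{\tau\in\mathcal C(G)}(-1)^{|\tau|}w(\tau), \qquad \varepsilon(G)=\sum_{\tau\in\mathcal C(G)}w(\tau),\]
so $\varepsilon(G)-h(G)=2\sum_{\tau\in\mathcal C(G),\;|\tau|\text{ odd}}w(\tau)\geq 0$, with equality iff $\mathcal C(G)$ contains no odd-cardinality element, equivalently iff every cycle of $G$ has even length, i.e.\ $G$ is bipartite. The main obstacle I anticipate is the sign analysis of $\tilde f_v$: both the odd-$\ell$ vanishing (via palindrome symmetry) and the signed positive integral representation for even $\ell$ require a delicate computation.
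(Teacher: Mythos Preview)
Your proof is correct and, while it shares the gauge-transformation philosophy of the paper, takes a more direct route for this particular inequality. The paper establishes $\varepsilon(G)=F_G(\underline{s})$ and $h(G)=F_G(\underline{c})$ separately --- the first via the complex duality of Theorem~\ref{orientations} (expanding $F_G$ over orientations through the $Q_{(k)}$-polynomials and then using that $\underline{s}$ is the $0$-eigenvector of $A^{(d)}$), the second via the real rotation $\mathbf{R}_{\pi/4}$ --- and then concludes from the termwise bound $|c_k|=s_k$. You instead observe that $\varepsilon(G)$ and $h(G)$ share the \emph{same} vertex tensor $\mathbf{1}[\,|x|=d_v/2\,]$ and differ only in the edge tensor ($\sigma_X$ versus $I$); a single Hadamard gauge diagonalizes both edges at once ($H\sigma_XH=\sigma_Z$, $HIH=I$). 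Your resulting cycle-space expansion $\varepsilon(G)=\sum_{\tau\in\mathcal C(G)}w(\tau)$ and $h(G)=\sum_{\tau\in\mathcal C(G)}(-1)^{|\tau|}w(\tau)$ is exactly what the paper's two theorems combine to give (your $w_v(y)$ at $|y|=k$ coincides with the paper's $s_k^{(d_v)}$, and your $\tilde f_v(y)$ with its $c_k^{(d_v)}$), but you reach it without complex gauges, the $Q_{(k)}$ expansion, or the eigenvector argument. The trade-off: the paper's heavier machinery is reusable for the other applications (Schrijver's bound, the $3$-regular source/sink distribution), while your argument is self-contained and tailored to comparing these two quantities directly.
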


\begin{Cor} \label{orientations and half-graphs, regular}
Let $G$ be a $(2k)$--regular graph. Then it has at least as many Eulerian orientations as $k$--regular subgraphs. Equality holds if and only if $G$ is bipartite.
\end{Cor}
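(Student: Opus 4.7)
The corollary is the regular specialisation of Theorem~\ref{orientations and half-graphs, non-regular}: if $G$ is $(2k)$-regular then a spanning subgraph $H$ is a half-graph exactly when $d_H(v)=d_G(v)/2=k$ for every $v$, i.e.\ exactly when $H$ is $k$-regular. Hence $h(G)$ is the number of $k$-regular spanning subgraphs of $G$, and both the inequality $\varepsilon(G)\ge h(G)$ and the characterisation of equality transfer verbatim. So the real content to sketch is how one would prove Theorem~\ref{orientations and half-graphs, non-regular}.

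My plan is to cast both counts as partition functions of edge--spin models sharing a common vertex tensor, and then relate them by a gauge transformation. Fix an arbitrary reference orientation of $G$; encode an orientation by $x \in \{\pm 1\}^E$ (with $x_e = +1$ meaning agreement with the reference) and a spanning subgraph by $y \in \{\pm 1\}^E$ (with $y_e = +1$ meaning inclusion). Setting $s_e^v \in \{\pm 1\}$ so that $e$ is oriented into $v$ in the reference iff $s_e^v = +1$ — and so $s_e^u = -s_e^v$ along every edge $uv$ — one has
\begin{align*}
\varepsilon(G) &= \sum_{x \in \{\pm 1\}^E} \prod_v \mathbf{1}\!\left[\sum_{e \ni v} s_e^v\, x_e = 0\right], \\
h(G) &= \sum_{y \in \{\pm 1\}^E} \prod_v \mathbf{1}\!\left[\sum_{e \ni v} y_e = 0\right].
\end{align*}
Both partition functions use the same "half-and-half" vertex indicator on $2k$ spins, and differ only through the edge-sign pattern $(s_e^v)$, whose global trivialisability is exactly the bipartiteness of $G$: in the bipartite case, a reference orientation sending one part to the other makes $s_e^v$ constant within each class, so after a single sign flip the two sums coincide and $\varepsilon(G) = h(G)$.

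For general $G$ the plan is to apply a Hadamard-type gauge on each edge, inserting a $2\times 2$ matrix $M_e$ and its inverse across the edge variable and pushing the factors into the two incident vertex tensors. In the new basis one expects $\varepsilon(G)$ to expand as a sum whose main term reproduces $h(G)$ and whose remaining terms are indexed by non-trivial elements of the cycle space $H_1(G;\mathbb{Z}_2)$; the aim is to show that each remaining term is non-negative and that they all vanish precisely when $(s_e^v)$ admits a global trivialisation, i.e.\ when $G$ is bipartite. The principal obstacle is identifying the correct gauge and verifying term-by-term non-negativity of the resulting expansion — the combinatorics of the Hadamard transform of the half-and-half vertex tensor interacting with the parity of $(s_e^v)$ is exactly where the real work will happen; once that is in hand, both the inequality and the equality case fall out together.
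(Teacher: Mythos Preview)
Your reduction of the corollary to Theorem~\ref{orientations and half-graphs, non-regular} is correct and is exactly what the paper does: for a $(2k)$-regular $G$ the half-graphs are precisely the $k$-regular spanning subgraphs, so nothing further is needed at that step.

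For Theorem~\ref{orientations and half-graphs, non-regular} itself, however, what you have is a plan rather than a proof: you explicitly leave the ``principal obstacle'' (identifying the gauge and checking term-by-term non-negativity) unresolved. The paper's argument is both different and much shorter, because by this point the work has already been done elsewhere. Theorems~\ref{Euler orientations, regular}--\ref{half-graphs, non-regular} give
\[
\varepsilon(G)=F_G(\underline{s})\qquad\text{and}\qquad h(G)=F_G(\underline{c}),
\]
with explicit vectors satisfying $|c_k|=s_k$ for every $k$. Since $F_G(\underline{x})=\sum_{A\subseteq E}\prod_v x_{d_A(v)}$ is a sum of monomials, the inequality $F_G(\underline{s})\geq F_G(\underline{c})$ is immediate term by term; an odd cycle produces a term where the two sides differ, and in the bipartite case a direct bijection gives equality. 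That is the entire proof.

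Your anticipated structure is in fact what falls out of this. Since $s_k=c_k=0$ for odd $k$, only Eulerian subgraphs $A$ (exactly the elements of the cycle space you mention) contribute to either sum, and one checks $\prod_v c_{d_A(v)}=(-1)^{|A|}\prod_v s_{d_A(v)}$, so
\[
\varepsilon(G)-h(G)=2\sum_{\substack{A\ \text{Eulerian}\\ |A|\ \text{odd}}}\prod_v s_{d_A(v)}\geq 0,
\]
vanishing iff $G$ has no odd cycle. So a correctly executed Hadamard gauge would land on the same expansion. What the paper gains by routing through $F_G$ is that the ``correct gauge'' is already in hand (the rotation $\mathbf{R}_{\pi/4}$ on the half-graph side, the complex gauge of Theorem~\ref{orientations} on the orientation side), and the non-negativity collapses to the single observation $|c_k|=s_k$ rather than a Fourier computation on the half-and-half tensor that you still have to carry out.
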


Next we study random orientations.

\begin{Th} \label{sources and sinks}
Let $G$ be a connected $3$--regular graph on $n$ vertices. Let us choose an orientation $\mathcal{O}$ of $G$ uniformly at random, and let $n_+(\mathcal{O})$ be the number of vertices with out-degree $3$, and let $n_-(\mathcal{O})$ be the number of vertices with in-degree $3$. Then the probability that $n_+(\mathcal{O})-n_-(\mathcal{O})=k$ is exactly $\frac{\binom{n}{n/2-2k}}{2^{n-1}}$.
\end{Th}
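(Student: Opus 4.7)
The plan is to rewrite $n_+(\mathcal{O})-n_-(\mathcal{O})$ as a polynomial in $\pm 1$ edge variables and then exploit the group-theoretic structure of uniformly random orientations.

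First I would fix a reference orientation $\mathcal{O}_0$ and encode an orientation $\mathcal{O}$ by independent uniform signs $x_e\in\{\pm 1\}$ recording whether $\mathcal{O}$ agrees with $\mathcal{O}_0$ on $e$. For a vertex $v$ with incident edges $e_1,e_2,e_3$, let $\epsilon_{v,e}\in\{\pm 1\}$ indicate whether $e$ leaves $v$ in $\mathcal{O}_0$, and set $y^{(v)}_e=\epsilon_{v,e}x_e$, so that $y^{(v)}_e=+1$ iff $e$ leaves $v$ in the actual orientation. Checking the eight sign patterns in $\{\pm 1\}^3$ yields the pointwise identity
\[
[v\text{ is a source}]-[v\text{ is a sink}] \;=\; \frac{(y^{(v)}_{e_1}+y^{(v)}_{e_2}+y^{(v)}_{e_3})+y^{(v)}_{e_1}y^{(v)}_{e_2}y^{(v)}_{e_3}}{4}.
\]
Summing over $v$, the linear part vanishes because each edge contributes $+1$ at one endpoint and $-1$ at the other, leaving
\[
n_+(\mathcal{O})-n_-(\mathcal{O}) \;=\; \tfrac{1}{4}\sum_{v\in V}\eta_v\prod_{e\ni v}x_e, \qquad \eta_v:=\prod_{e\ni v}\epsilon_{v,e}.
\]

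Next I would analyze the random vector $R(x)=\bigl(\prod_{e\ni v}x_e\bigr)_{v\in V}\in\{\pm 1\}^V$. The map $x\mapsto R(x)$ is a group homomorphism, and its kernel consists of those $x$ for which $\{e:x_e=-1\}$ is an even subgraph of $G$; since $G$ is connected, this kernel has size $2^{|E|-|V|+1}=2^{n/2+1}$. Hence $R$ is a uniform surjection onto the index-$2$ subgroup $\{P\in\{\pm 1\}^V:\prod_v P_v=1\}$, and multiplying coordinatewise by the fixed vector $(\eta_v)$ shows that $Q_v:=\eta_v\prod_{e\ni v}x_e$ is uniformly distributed on the coset $\{Q\in\{\pm 1\}^V:\prod_v Q_v=(-1)^{|E|}\}$. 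The sign arises because $\epsilon_{u,e}\epsilon_{w,e}=-1$ for every edge $e=\{u,w\}$, so $\prod_v\eta_v=(-1)^{|E|}=(-1)^{3n/2}$.

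To finish, write $N_+:=\#\{v:Q_v=+1\}$, so $\sum_v Q_v=2N_+-n$ and the event $n_+(\mathcal{O})-n_-(\mathcal{O})=k$ becomes $N_+=n/2+2k$. The parity constraint $\prod_v Q_v=(-1)^{3n/2}$ forces $N_+\equiv n/2\pmod 2$, which holds automatically when $N_+=n/2+2k$. Among the $2^{n-1}$ vectors in the coset, exactly $\binom{n}{n/2+2k}=\binom{n}{n/2-2k}$ realize this value, yielding the stated probability.

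The only places where one could slip are the pointwise formula for the source/sink indicator in terms of the sum and product of the incident signs, and the sign bookkeeping that gives $\prod_v\eta_v=(-1)^{|E|}$. Both are finite verifications, but they drive the whole argument; once they are in hand, the rest reduces to the standard fact that even subgraphs of a connected graph form a group of size $2^{|E|-|V|+1}$.
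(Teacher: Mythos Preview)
Your argument is correct. The pointwise identity for the source/sink indicator, the cancellation of linear terms, the identification of the image of $R$ with the index-$2$ subgroup via the cycle-space dimension, and the final parity count all check out.

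Your route is genuinely different from the paper's. The paper applies its gauge-transformation machinery (Theorem~\ref{orientations}) to rewrite the generating function $H_G(1/t,1,1,t)$ as a subgraph counting polynomial $F_G(a,0,0,b)$ for suitable $a,b$, then uses connectedness to observe that the only subgraphs contributing to $F_G(a,0,0,b)$ are the empty and the full edge sets, so $F_G(a,0,0,b)=a^n+b^n$; a binomial expansion finishes the job. You instead Fourier-expand the source/sink indicator directly over $\{\pm 1\}^E$, reduce to the distribution of the vector $\bigl(\prod_{e\ni v}x_e\bigr)_v$, and use connectedness via the cycle-space dimension $|E|-|V|+1$. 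Your approach is entirely elementary and self-contained, requiring none of the paper's holographic/gauge framework; the paper's approach, while heavier, exhibits the theorem as one more instance of the same transformation technique that drives the rest of the article. It is worth noting that the two uses of connectedness are essentially the same fact in disguise: the subgraphs with all degrees in $\{0,3\}$ are exactly the unions of components, and the cycle space having dimension $|E|-|V|+1$ both encode that a connected cubic graph has no nontrivial such decomposition.
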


\subsection{Subgraph counting polynomial}

The main object that we will study in this paper is the following multivariate graph polynomial. We will call it the subgraph counting polynomial.
First we introduce it for regular graphs, and then for non-regular graphs.

\begin{Def} Let $G$ be a $d$--regular graph. Then the subgraph counting polynomial of $G$ is defined as 
$$F_G(x_0,x_1,\dots ,x_d)=\sum_{A \subseteq E(G)}\prod_{v\in V(G)}x_{d_A(v)},$$
where $d_A(v)$ is the degree of the vertex $v$ in the subgraph $(V,A)$.
\end{Def}

\begin{Ex}
\begin{align*}
F_{K_4}(x_0,x_1,x_2,x_3)&=x_0^4 + 6x_0^2x_1^2 + 3x_1^4 + 12x_0x_1^2x_2  \\
&+ 12x_1^2x_2^2+4x_0x_2^3+ 3x_2^4 + 4x_1^3x_3  \\
&+ 12x_1x_2^2x_3 + 6x_2^2x_3^2 + x_3^4.
\end{align*}
\end{Ex}

This polynomial naturally encodes certain counting problems. For instance, $F_G(0,1,0,\dots ,0)$ simply counts the number of perfect matchings of the graph $G$. Invariant theory comes into the picture by the observation that $F_G(x_0,\dots,x_d)$ is invariant under some group actions. For instance, $F_G(0,0,1,0,0)=F_G\left(\frac{3}{2},0,-\frac{1}{2},0,\frac{3}{2}\right)$ for any $4$--regular graph $G$.
\medskip

The reason why we first introduced the subgraph counting polynomial of a regular graph $G$ is that for non-regular graphs the useful generalization is not the natural one. The natural one would be to keep the definition with $d$ being the maximum degree. The useful or correct generalization is to first introduce $d_v+1$ variables for each vertex $v$, namely, $x^v_0,x^v_1,\dots ,x^v_{d_v}$ and we denote by $\underline{x}$ the collection of all variables $x^v_k$ for all $v$ and $k$. Then we can define the multi-affine polynomial 
$$F_G(\underline{x})=\sum_{A \subseteq E(G)}\prod_{v\in V(G)}x^v_{d_A(v)},$$
where $d_A(v)$ is the degree of the vertex $v$ in the subgraph $(V,A)$.
\medskip

\subsection{Organization of the paper}

This paper is organized as follows. In the next section we review the concept of normal factor graph and gauge transformation. Then in Section~\ref{symmetric functions} we specialize the partition function of a normal factor graph to get the above defined subgraph counting polynomial. Then in Section~\ref{duality} we show how to express a summation to orientations by the subgraph counting polynomial. Utilizing this new observation we prove Theorems~\ref{Schrijver-non-regular}, \ref{Schrijver-regular}, \ref{orientations and half-graphs, non-regular} and \ref{orientations and half-graphs, regular} in Section~\ref{Eulerian orientations}.

\section{Normal factor graphs and gauge transformations} \label{normal factor graph}

In this section we first introduce the concept of a normal factor graph, and then the gauge transformations.

\begin{Def} A normal factor graph is a graph equipped with a function at each vertex: $\mathcal{H}=(V,E,(f_v)_{v\in V})$. At each edge $e$ there is a variable $x_e$ taking values from an alphabet $\mathcal{X}$. The partition function 
$$Z(\mathcal{H})=\sum_{\sigma\in \mathcal{X}^E}\prod_{v\in V}f_v(\sigma_{\partial v}),$$ 
where $\sigma_{\partial v}$ is the restriction of $\sigma$ to the the edges incident to the vertex $v$. 
If the alphabet $\mathcal{X}$ is of size $2$, then we call the normal factor graph a  binary normal factor graph.
\end{Def}

For instance, if $\mathcal{X}=\{0,1\}$ and 
$$f_v(\sigma_1,\dots \sigma_d)=\left\{ \begin{array}{cl} 1 & \mbox{if}\ \sum_{i=1}^d\sigma_i=1, \\ 0 & \mbox{otherwise}, \end{array} \right.$$
then $Z(\mathcal{H})$ is exactly the number of perfect matchings of the underlying graph. 
\bigskip

\begin{Rem} L. Valiant, Jin-Yi Cai and his coauthors call the problem of determination or approximation of $Z(\mathcal{H})$ the Holant problem and $Z(\mathcal{H})$ itself the Holant. The emphasis in their papers are somewhat different: they would like to reduce the computation of $Z(\mathcal{H})$ to counting perfect matchings in planar graphs. Generally, in the reduction planarity does not play any role, still it is important to keep the graph planar under the reductions since we can count the number of perfect matchings in planar graphs. They introduce the so-called matchgates that are related to gauge transformations. 
J. M. Landsberg, J. Morton and S. Norine \cite{landsberg2013holographic} showed that holographic reduction can be carried out without matchgates.

\end{Rem}

Let $\mathcal{H}=(V,E,(f_v)_{v\in V})$ be a normal factor graph with alphabet $\mathcal{X}$. We will show that is possible to introduce a new normal factor graph $\widehat{\mathcal{H}}$ on the same graph with new functions $\widehat{f_v}$ such that $Z(\widehat{\mathcal{H}})=Z(\mathcal{H})$. As we will see sometimes it will be more convenient to study the new normal factor graph $\widehat{\mathcal{H}}$.

Let $\mathcal{Y}$ be a new alphabet, and for each edge $(u,v)\in E$ let us introduce two new matrices, $G_{uv}$ and $G_{vu}$ of size $\mathcal{Y}\times \mathcal{X}$. The new variables will be denoted by $\tau$, the old ones by $\sigma$. We will denote by $\mathbb{G}$ the collection of the matrices $G_{uv}$. Let
$$\widehat{f_{\mathbb{G},v}}(\tau_{vu_1},\dots ,\tau_{vu_k})=\sum_{\sigma_{vu_1},\dots ,\sigma_{vu_k}}\left(\prod_{u_i \in N(v)}G_{vu_i}(\tau_{vu_i},\sigma_{vu_i})\right)f_v(\sigma_{vu_1},\dots ,\sigma_{vu_k}).$$
This way we defined the functions $\widehat{f_{\mathbb{G},v}}$ of $\widehat{\mathcal{H}}$.

\begin{Th}[M. Chertkov and V. Chernyak \cite{chertkov2006loop2,chertkov2006loop1}] If for each edge $(u,v)\in E$ we have $G^T_{uv}G_{vu}=\mathrm{Id}_{\mathcal{X}}$, then $Z(\widehat{\mathcal{H}})=Z(\mathcal{H})$. 
\end{Th}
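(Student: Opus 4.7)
The plan is a direct unpacking of the definitions: substitute $\widehat{f_{\mathbb{G},v}}$ into $Z(\widehat{\mathcal{H}})$, reorder the summations so that the dummy variables of the gauge sums get exposed, and then use the hypothesis $G^T_{uv}G_{vu}=\mathrm{Id}_{\mathcal{X}}$ to collapse those sums to Kronecker deltas. A notational point that makes everything transparent is to regard each edge $e=(u,v)$ as having two half-edges, one at $u$ and one at $v$; the dummy summation variables in the definition of $\widehat{f_{\mathbb{G},v}}$ naturally live on half-edges, and I would denote them $\sigma^v_{vu}$, so that the two independent copies at the ends of $e$ are distinguished.

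Concretely, I would first write
$$Z(\widehat{\mathcal{H}})=\sum_{\tau\in\mathcal{Y}^E}\prod_{v\in V}\sum_{\sigma^v\in\mathcal{X}^{\partial v}}\left(\prod_{u\in N(v)}G_{vu}(\tau_{vu},\sigma^v_{vu})\right)f_v(\sigma^v_{\partial v}),$$
then pull all the $\sigma^v$-sums to the outside, leaving only the $\tau$-sum inside. That inner $\tau$-sum factors over edges: for each edge $e=(u,v)$ the only $\tau$-variable appearing is $\tau_e$, and it shows up in exactly two $G$-factors, namely $G_{vu}(\tau_e,\sigma^v_{vu})$ coming from the vertex $v$ and $G_{uv}(\tau_e,\sigma^u_{uv})$ coming from the vertex $u$. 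Summing $\tau_e$ over $\mathcal{Y}$ yields the matrix product $(G^T_{vu}G_{uv})(\sigma^v_{vu},\sigma^u_{uv})$, and by transposing the hypothesis this equals $\delta(\sigma^v_{vu},\sigma^u_{uv})$. These deltas force the two half-edge copies to agree for every edge, so the remaining sum is over genuine edge variables $\sigma\in\mathcal{X}^E$, and what is left is exactly $\sum_\sigma\prod_v f_v(\sigma_{\partial v})=Z(\mathcal{H})$.

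The only thing that really needs care is the half-edge bookkeeping: the two independent dummy $\sigma$-variables at the endpoints of each edge are precisely what the identity $G^T_{uv}G_{vu}=\mathrm{Id}_{\mathcal{X}}$ is designed to glue back together, and conflating them from the start would make the computation look circular. Apart from that, the proof is Fubini plus one elementary matrix identity, and I expect no hidden obstacle.
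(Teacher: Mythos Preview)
Your proposal is correct and essentially identical to the paper's proof: both substitute the definition of $\widehat{f_{\mathbb{G},v}}$, expand, interchange summations, factor the $\tau$-sum over edges, and use $G^T_{uv}G_{vu}=\mathrm{Id}_{\mathcal{X}}$ to produce a Kronecker delta that glues the two half-edge variables together. The only cosmetic difference is notation: the paper writes the two half-edge copies as $\sigma_{uv}$ and $\sigma_{vu}$ where you write $\sigma^u_{uv}$ and $\sigma^v_{vu}$, and the paper lands directly on $G^T_{uv}G_{vu}$ whereas you get the transposed $G^T_{vu}G_{uv}$ and note it is also the identity.
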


\begin{proof}
Let us start to compute 
$Z(\widehat{\mathcal{H}})=\sum_{\tau\in \mathcal{Y}^E}\prod_{v\in V}\widehat{f_{\mathbb{G},v}}(\tau_{\partial v})$:
$$Z(\widehat{\mathcal{H}})=\sum_{\tau\in \mathcal{Y}^E}\prod_{v\in V}\left[\sum_{\sigma_{vu_1},\dots ,\sigma_{vu_k}}\left(\prod_{u_i \in N(v)}G_{vu_i}(\tau_{vu_i},\sigma_{vu_i})\right)f_v(\sigma_{vu_1},\dots ,\sigma_{vu_k})\right].$$
If we expand it will have terms $\prod_{v\in V}f_v(\sigma_{vu_1},\dots ,\sigma_{vu_k})$ with some coefficients. A priori it can occur that these terms are incompatible in the sense that $\sigma_{uv}\neq \sigma_{vu}$. As we will see that the role of the conditions on $G_{uv}$ is exactly to ensure that if there is an edge $(u,v)\in E$ with 
$\sigma_{uv}\neq \sigma_{vu}$, then the coefficient is $0$, and if all edges are compatible, then the coefficient is $1$. Indeed, the coefficient is
$$\sum_{\tau\in \mathcal{Y}^E}\prod_{v\in V}\prod_{u_i \in N(v)}G_{vu_i}(\tau_{vu_i},\sigma_{vu_i}).$$
Note that $\tau_{uv}=\tau_{vu}$ for each edge, and this variable appears only at the vertices $u$ and $v$, and nowhere else. Hence
$$\sum_{\tau\in \mathcal{Y}^E}\prod_{v\in V}\prod_{u_i \in N(v)}G_{vu_i}(\tau_{vu_i},\sigma_{vu_i})=\prod_{(u,v)\in E}\left(\sum_{\tau_{uv}}G_{uv}(\tau_{uv},\sigma_{uv})G_{vu}(\tau_{vu},\sigma_{vu})\right)=$$
$$=\prod_{(u,v)\in E}\left(\sum_{\tau_{uv}}G^T_{uv}(\sigma_{uv},\tau_{vu})G_{vu}(\tau_{vu},\sigma_{vu})\right)=\prod_{(u,v)\in E}(G^T_{uv}G_{vu})_{\sigma_{uv},\sigma_{vu}}=
\prod_{(u,v)\in E}(\mathrm{Id})_{\sigma_{uv},\sigma_{vu}}.$$
Hence this is only non-zero if $\sigma_{uv}=\sigma_{vu}$ for each edge $(u,v)\in E(G)$, and then this coefficient is $1$.
\end{proof}

Next we show what happens when we apply two gauge transformations to the same function $f$. Suppose that $\mathbb{G}'$ is another collection of matrices $G'_{uv}$ of size $\mathcal{Z}\times \mathcal{Y}$, where $\mathcal{Z}$ is new alphabet whose elements will be denoted by $\gamma$. We will denote by $\mathbb{G}'\mathbb{G}$ the matrices $G'_{uv}G_{uv}$. 

\begin{Th} \label{multiplication} We have
$$\widehat{\widehat{f_{\mathbb{G},v}}_{\mathbb{G}',v}}=\widehat{f_{\mathbb{G}'\mathbb{G},v}}.$$
\end{Th}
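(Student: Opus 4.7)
The plan is a direct unfolding of the two nested gauge transformations, followed by swapping the order of summation and collapsing the inner sum into a matrix product. Since each gauge transformation acts locally at a single vertex $v$, the equation $\widehat{\widehat{f_{\mathbb{G},v}}_{\mathbb{G}',v}}=\widehat{f_{\mathbb{G}'\mathbb{G},v}}$ is purely a statement about the function attached to $v$, so only the definition of $\widehat{(\cdot)}$ given right before the previous theorem is needed; the compatibility condition $G^T_{uv}G_{vu}=\mathrm{Id}_{\mathcal{X}}$ plays no role here.

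First I would fix a vertex $v$ with neighbors $u_1,\dots,u_k$ and simply write out the left-hand side step by step. Applying the definition of the gauge transformation with $\mathbb{G}'$ to the already-transformed function $\widehat{f_{\mathbb{G},v}}$ gives a double sum over the intermediate alphabet $\mathcal{Y}$ (the $\tau$-variables) and the original alphabet $\mathcal{X}$ (the $\sigma$-variables), with product of entries of $G'_{vu_i}$, $G_{vu_i}$, and one factor of $f_v(\sigma_{vu_1},\dots,\sigma_{vu_k})$.

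The main computation is then to pull $f_v(\sigma_{vu_1},\dots,\sigma_{vu_k})$ out of the $\tau$-summation and interchange the sums so that the $\tau$-sum appears on the inside. Because the summand factors as a product over the neighbors $u_i$ of $v$ and each $\tau_{vu_i}$ appears in only one factor, the inner sum splits as
\[
\prod_{i=1}^{k}\sum_{\tau_{vu_i}\in \mathcal{Y}}G'_{vu_i}(\gamma_{vu_i},\tau_{vu_i})\,G_{vu_i}(\tau_{vu_i},\sigma_{vu_i}) \;=\; \prod_{i=1}^{k}(G'_{vu_i}G_{vu_i})(\gamma_{vu_i},\sigma_{vu_i}),
\]
which is the desired entry of the product matrix at edge $vu_i$. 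Substituting this back into the outer $\sigma$-sum reproduces exactly the definition of $\widehat{f_{\mathbb{G}'\mathbb{G},v}}(\gamma_{vu_1},\dots,\gamma_{vu_k})$.

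I do not anticipate a real obstacle: the only thing to be careful about is bookkeeping of the three alphabets $\mathcal{X},\mathcal{Y},\mathcal{Z}$ and of which matrix is multiplied on which side (the definition puts the new variable in the first argument and the old one in the second, so the order must come out as $G'_{vu_i}G_{vu_i}$, matching the convention $\mathbb{G}'\mathbb{G}$ adopted right before the theorem). Once the indices are aligned, the proof is a one-line consequence of the distributive law and the definition of matrix multiplication.
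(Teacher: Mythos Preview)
Your proposal is correct and follows essentially the same route as the paper: unfold the two gauge transformations, interchange the $\sigma$- and $\tau$-sums, factor the inner $\tau$-sum over the neighbors of $v$, and recognize each factor as an entry of the matrix product $G'_{vu_i}G_{vu_i}$. Your remark that the condition $G^T_{uv}G_{vu}=\mathrm{Id}_{\mathcal{X}}$ is irrelevant here is also accurate.
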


\begin{proof}
We have
$$\widehat{f_{\mathbb{G},v}}(\tau_{vu_1},\dots ,\tau_{vu_k})=\sum_{\sigma_{vu_1},\dots ,\sigma_{vu_k}}\left(\prod_{u_i \in N(v)}G_{vu_i}(\tau_{vu_i},\sigma_{vu_i})\right)f_v(\sigma_{vu_1},\dots ,\sigma_{vu_k})$$
and so
$$\widehat{\widehat{f_{\mathbb{G},v}}_{\mathbb{G}',v}}(\gamma_{vu_1},\dots ,\gamma_{vu_k})=\sum_{\tau_{vu_1},\dots ,\tau_{vu_k}}\left(\prod_{u_i \in N(v)}G'_{vu_i}(\gamma_{vu_i},\tau_{vu_i})\right)\widehat{f_{\mathbb{G},v}}(\tau_{vu_1},\dots ,\tau_{vu_k})=$$
$$\sum_{\tau_{vu_1},\dots ,\tau_{vu_k}}\left(\prod_{u_i \in N(v)}G'_{vu_i}(\gamma_{vu_i},\tau_{vu_i})\right)\sum_{\sigma_{vu_1},\dots ,\sigma_{vu_k}}\left(\prod_{u_i \in N(v)}G_{vu_i}(\tau_{vu_i},\sigma_{vu_i})\right)f_v(\sigma_{vu_1},\dots ,\sigma_{vu_k})=$$
$$=\sum_{\sigma_{vu_1},\dots ,\sigma_{vu_k}}f_v(\sigma_{vu_1},\dots ,\sigma_{vu_k})\sum_{\tau_{vu_1},\dots ,\tau_{vu_k}}\prod_{u_i \in N(v)}G'_{vu_i}(\gamma_{vu_i},\tau_{vu_i})G_{vu_i}(\tau_{vu_i},\sigma_{vu_i})=$$
$$=\sum_{\sigma_{vu_1},\dots ,\sigma_{vu_k}}f_v(\sigma_{vu_1},\dots ,\sigma_{vu_k})\prod_{u_i \in N(v)}\left(\sum_{\tau_{vu_i}}G'_{vu_i}(\gamma_{vu_i},\tau_{vu_i})G_{vu_i}(\tau_{vu_i},\sigma_{vu_i})\right)=$$
$$=\sum_{\sigma_{vu_1},\dots ,\sigma_{vu_k}}f_v(\sigma_{vu_1},\dots ,\sigma_{vu_k})\prod_{u_i \in N(v)}(G'G)_{vu_i}(\gamma_{vu_i},\sigma_{vu_i})=\widehat{f_{\mathbb{G}'\mathbb{G},v}}(\gamma_{vu_1},\dots ,\gamma_{vu_k}).$$
We are done.
\end{proof}

\section{Normal factor graphs with symmetric functions} \label{symmetric functions}

Let us consider the binary normal factor graph with functions
$$f_v(\sigma_1,\dots ,\sigma_d)=x^v_k\ \ \ \mbox{if}\ \ \ \sum_{i=1}^d\sigma_i=k$$
for every $v\in V(G)$. Then we immediately get back the definition of the subgraph counting polynomial $F_G(\underline{x})$. Jin-Yi Cai and his coauthors call this the Holant function with symmetric signature, see for instance \cite{cai2007symmetric}.

That is, we can regard $F_G(\underline{x})$ as the partition function $Z(\mathcal{H})$, or as a polynomial in variables $x^v_0,\dots ,x^v_{d_v}$.
In what follows we omit $v$ from the notation and we simply write $x_k$ instead of $x^v_k$. Furthermore, we simply write $d$ instead of $d_v$. 
Let us use the same gauge everywhere
$$G_t:=\left( \begin{array}{cc} \cos(t) & \sin(t) \\ -\sin(t) & \cos(t) \end{array}\right).$$ 
Then
$$\widehat{f_{G_t,v}}(\tau_{vu_1},\dots ,\tau_{vu_d})=\sum_{\sigma_{vu_1},\dots ,\sigma_{vu_d}}\left(\prod_{u_i \in N(v)}G_{vu_i}(\tau_{vu_i},\sigma_{vu_i})\right)f_v(\sigma_{vu_1},\dots ,\sigma_{vu_d})$$
or more conveniently,
$$\widehat{f_{G_t,v}}(\tau_{1},\dots ,\tau_{d})=\sum_{\sigma_{1},\dots ,\sigma_{d}}\left(\prod_{i=1}^dG(\tau_{i},\sigma_{i})\right)f_v(\sigma_{1},\dots ,\sigma_{d})$$
only depends on the value $\sum_{i=1}^d\tau_{i}$. Let
$$\widehat{a_r}(t)=\widehat{f_{G_t,v}}(\tau_{1},\dots ,\tau_{d})\ \ \ \mbox{if} \ \ \ \sum_{i=1}^d\tau_i=r.$$
Then
$$\widehat{a_r}(t)=\sum_{k=0}^d\left[\sum_{s=0}^r\binom{r}{s}\binom{d-r}{k-s}(-1)^{r-s}\cos(t)^{d-r-k+2s}\sin(t)^{r+k-2s}\right]x_k.$$
Indeed, we first choose $s$ places where we keep $1$'s (and we switch the remaining $r-s$ pieces of $1$'s to $0$), then we need to choose $k-s$ places where we switch the $0$ to $1$ to get exactly $k$ pieces of $1$'s. Then in $d-r-k+2s$ cases we kept the original value, and in $r+k-2s$ we switched it, the sign $r-s$ comes from switching $r-s$ pieces of $1$'s to $0$.

\subsection{The functions $\widehat{a_r}(t)$}

In this section we study the functions $\widehat{a_r}(t)$.

\begin{Lemma} \label{rotation matrix rows}
Let us introduce a new variable $x$, and a linear map $L$ such that $L(x^k)=x_k$. Then
$$\widehat{a_r}(t)=L\left((x\cos(t)-\sin(t))^r(x\sin(t)+\cos(t))^{d-r}\right).$$
\end{Lemma}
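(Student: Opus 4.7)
The plan is to verify the claim by a direct binomial expansion of the right-hand side and match it term-by-term with the explicit formula for $\widehat{a_r}(t)$ given just before the lemma.

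First I would apply the binomial theorem to each factor:
\begin{align*}
(x\cos(t)-\sin(t))^r &= \sum_{s=0}^r \binom{r}{s}(x\cos(t))^s(-\sin(t))^{r-s}, \\
(x\sin(t)+\cos(t))^{d-r} &= \sum_{j=0}^{d-r} \binom{d-r}{j}(x\sin(t))^j(\cos(t))^{d-r-j}.
\end{align*}
Multiplying these and collecting the coefficient of $x^{s+j}$, I would substitute $k=s+j$ (so $j=k-s$) to group terms by the total power of $x$. The exponent of $\cos(t)$ in the resulting term is $s+(d-r-j)=d-r-k+2s$ and the exponent of $\sin(t)$ is $(r-s)+j=r+k-2s$, while the sign contribution is $(-1)^{r-s}$.

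The product therefore equals
$$\sum_{k=0}^d x^k \left[\sum_{s=0}^r \binom{r}{s}\binom{d-r}{k-s}(-1)^{r-s}\cos(t)^{d-r-k+2s}\sin(t)^{r+k-2s}\right],$$
with the convention $\binom{d-r}{k-s}=0$ when $k-s<0$ or $k-s>d-r$, which matches the implicit range in the formula for $\widehat{a_r}(t)$. Applying the linear map $L$, which sends $x^k$ to $x_k$, converts this expression into exactly the displayed formula for $\widehat{a_r}(t)$.

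There is no real obstacle; the only thing one must be careful about is the index bookkeeping, namely that summing $s$ from $0$ to $r$ and $j$ from $0$ to $d-r$ is equivalent, after the substitution $k=s+j$, to letting $k$ range from $0$ to $d$ and $s$ range from $\max(0,k-(d-r))$ to $\min(r,k)$, which is automatically encoded by the vanishing of the binomial coefficients outside this range. Once that is observed, the lemma follows immediately from the binomial theorem and the definition of $L$.
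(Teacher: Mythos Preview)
Your proof is correct and follows essentially the same route as the paper: both expand each factor by the binomial theorem, substitute $j=k-s$ to collect the coefficient of $x^k$, and observe that this coefficient coincides with the coefficient of $x_k$ in the explicit formula for $\widehat{a_r}(t)$. Your additional remark about the summation ranges being absorbed into vanishing binomial coefficients is a nice clarification, but otherwise the arguments are identical.
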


\begin{proof}
By the binomial theorem the coefficient of $x^k$ in $(x\cos(t)-\sin(t))^r(x\sin(t)+\cos(t))^{d-r}$ is
$$\sum_{s=0}^r\binom{r}{s}\cos(t)^s(-\sin(t))^{r-s}\cdot \binom{d-r}{k-s}\sin(t)^{k-s}\cos(t)^{d-r-k+s}$$
which is equal to
$$\sum_{s=0}^r\binom{r}{s}\binom{d-r}{k-s}(-1)^{r-s}\cos(t)^{d-r-k+2s}\sin(t)^{r+k-2s}.$$
This is exactly the coefficient of $x_k$ in $\widehat{a_r}(t)$.
\end{proof}

\begin{Rem}
Another way to phrase the above lemma is the following. If $\alpha_1,\dots ,\alpha_m,\lambda_1,\dots ,\lambda_m$ satisfy that
$$x_k=\sum_{j=1}^m\alpha_j\lambda_j^k,$$
then
$$\widehat{a_r}(t)=\sum_{j=1}^m\alpha_j(\lambda_j\cos(t)-\sin(t))^r(\lambda_j\sin(t)+\cos(t))^{d-r}.$$

\end{Rem}

\begin{Lemma} We have
$$\frac{d}{dt}\widehat{a_r}(t)=(d-r)\widehat{a_{r+1}}(t)-r\widehat{a_{r-1}}(t).$$
\end{Lemma}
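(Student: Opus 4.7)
The plan is to apply Lemma~3.1 (the representation $\widehat{a_r}(t)=L\bigl((x\cos t-\sin t)^r(x\sin t+\cos t)^{d-r}\bigr)$) and differentiate under $L$, exploiting the fact that the two factors are rotations of one another and so their derivatives swap (up to sign).

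Concretely, I would set
\[
u(t,x) = x\cos t - \sin t, \qquad v(t,x) = x\sin t + \cos t,
\]
and observe the key identities
\[
\tfrac{\partial}{\partial t}u = -x\sin t - \cos t = -v, \qquad \tfrac{\partial}{\partial t}v = x\cos t - \sin t = u.
\]
These are just the derivatives of the columns of the rotation matrix $G_t$, and they are the only non-obvious input. From here the product rule gives
\[
\tfrac{\partial}{\partial t}\bigl(u^r v^{d-r}\bigr) = r u^{r-1}(-v)\,v^{d-r} + (d-r)u^r v^{d-r-1}\,u = (d-r)\,u^{r+1}v^{d-r-1} - r\,u^{r-1}v^{d-r+1}.
\]

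Finally I would apply the linear map $L$ to both sides. Since $L$ acts on coefficients in the variable $x$ and commutes with differentiation in $t$, and since by Lemma~3.1
\[
L\bigl(u^{r+1}v^{d-(r+1)}\bigr)=\widehat{a_{r+1}}(t), \qquad L\bigl(u^{r-1}v^{d-(r-1)}\bigr)=\widehat{a_{r-1}}(t),
\]
we conclude
\[
\tfrac{d}{dt}\widehat{a_r}(t) = (d-r)\widehat{a_{r+1}}(t) - r\widehat{a_{r-1}}(t),
\]
as desired. I do not foresee any real obstacle: the whole argument is one differentiation plus linearity of $L$, with the identities $\partial_t u=-v$ and $\partial_t v=u$ doing all the work. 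The only thing to be slightly careful about is the boundary cases $r=0$ and $r=d$, where one of the terms $\widehat{a_{r-1}}$ or $\widehat{a_{r+1}}$ is absent; but in the formula such a term is multiplied by $0$ (namely $r=0$ or $d-r=0$), so the identity holds uniformly if we adopt the convention $\widehat{a_{-1}}=\widehat{a_{d+1}}=0$.
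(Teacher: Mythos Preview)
Your argument is correct and is essentially the same as the paper's own proof: both differentiate $(x\cos t-\sin t)^r(x\sin t+\cos t)^{d-r}$ using the product rule and the identities $\partial_t u=-v$, $\partial_t v=u$, then invoke linearity of $L$ from Lemma~3.1. Your treatment is slightly more explicit about the commutation of $L$ with $\tfrac{d}{dt}$ and about the boundary cases $r=0,d$, but the substance is identical.
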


\begin{proof}  This immediately follows from the previous lemma since
$$\frac{d}{dt}(c\cos(t)-\sin(t))^r(c\sin(t)+\cos(t))^{d-r}=$$
$$=(d-r)(c\cos(t)-\sin(t))^{r}(c\sin(t)+\cos(t))^{d-r-1}(c\cos(t)-\sin(t))+$$
$$+r(c\cos(t)-\sin(t))^{r-1}(c\sin(t)+\cos(t))^{d-r}(-c\sin(t)-\cos(t))=$$
$$(d-r)\widehat{a_{r+1}}(t)-r\widehat{a_{r-1}}(t).$$
\end{proof}

\subsection{The rotation matrices}

Let $\underline{\widehat{a}(t)}$ be the vector with entries $\widehat{a_r}(t)$, and similarly let $\underline{x}$ with entries $x_k$ for $k=0,\dots, d$. Since all functions $\widehat{a_r}(t)$ is linear in $x_0,x_1,\dots ,x_d$ we can simply introduce the matrix $\textbf{R}_t$ for which $\textbf{R}_t\underline{x}=\underline{\widehat{a}(t)}$.  So far we proved that for all graph $G$ and $t\in \mathbb{R}$ we have $F_G(\textbf{R}_t\underline{a})=F_G(\underline{a})$. Together with the following lemma we see that we landed in the field of invariant theory.

\begin{Lemma} For all $t_1,t_2$ we have $\textbf{R}_{t_1}\textbf{R}_{t_2}=\textbf{R}_{t_1+t_2}$.
\end{Lemma}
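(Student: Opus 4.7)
The plan is to reduce the lemma to the fact that the $2\times 2$ rotation matrices $G_t$ form a one-parameter group, and then to lift that algebraic identity from $2\times 2$ matrices to the $(d+1)\times(d+1)$ matrices $\textbf{R}_t$ via the composition law for gauge transformations (Theorem~\ref{multiplication}).

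First I would unpack what $\textbf{R}_t$ represents at the functional level. By construction, $\textbf{R}_t$ is the matrix that sends the coefficient vector $\underline{x}=(x_0,\dots,x_d)$ of the symmetric function $f_v$ to the coefficient vector $\underline{\widehat{a}(t)}=(\widehat{a_0}(t),\dots,\widehat{a_d}(t))$ of its gauge transform by $G_t$. Applying $\textbf{R}_{t_2}$ and then $\textbf{R}_{t_1}$ therefore corresponds to first gauge-transforming $f_v$ by $G_{t_2}$ and then gauge-transforming the resulting symmetric function by $G_{t_1}$; that is, the entries of $\textbf{R}_{t_1}\textbf{R}_{t_2}\underline{x}$ are the coefficient vector of $\widehat{\widehat{f_{G_{t_2},v}}_{G_{t_1},v}}$. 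On the other side of the desired identity, the entries of $\textbf{R}_{t_1+t_2}\underline{x}$ are the coefficient vector of $\widehat{f_{G_{t_1+t_2},v}}$.

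Now I would invoke Theorem~\ref{multiplication} with $\mathbb{G}=G_{t_2}$ and $\mathbb{G}'=G_{t_1}$, which yields $\widehat{\widehat{f_{G_{t_2},v}}_{G_{t_1},v}}=\widehat{f_{G_{t_1}G_{t_2},v}}$. The only remaining step is the standard $2\times 2$ computation $G_{t_1}G_{t_2}=G_{t_1+t_2}$, which follows from the angle-addition formulas $\cos(t_1+t_2)=\cos t_1\cos t_2-\sin t_1\sin t_2$ and $\sin(t_1+t_2)=\sin t_1\cos t_2+\cos t_1\sin t_2$ applied to the four entries of the matrix product. Combining the two identities gives $\widehat{\widehat{f_{G_{t_2},v}}_{G_{t_1},v}}=\widehat{f_{G_{t_1+t_2},v}}$, whose coefficient vectors agree; hence $\textbf{R}_{t_1}\textbf{R}_{t_2}\underline{x}=\textbf{R}_{t_1+t_2}\underline{x}$ for every $\underline{x}$, and the lemma follows.

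There is no real obstacle; the only care needed is the bookkeeping of the order of composition (Theorem~\ref{multiplication} states that applying $\mathbb{G}$ then $\mathbb{G}'$ gives the product $\mathbb{G}'\mathbb{G}$, which matches the usual matrix convention here since rotations commute anyway). An alternative self-contained route that avoids Theorem~\ref{multiplication} would be to use the Remark following Lemma~\ref{rotation matrix rows}: write each entry $\widehat{a_k}(t_2)$ in the form $\sum_j \beta_j\mu_j^k$ with $\mu_j=(\lambda_j\cos t_2-\sin t_2)/(\lambda_j\sin t_2+\cos t_2)$ and $\beta_j=\alpha_j(\lambda_j\sin t_2+\cos t_2)^d$, apply the Remark again with parameter $t_1$, and simplify $\mu_j\cos t_1-\sin t_1$ and $\mu_j\sin t_1+\cos t_1$ using the angle-addition formulas to recover $\widehat{a_r}(t_1+t_2)$. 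The first route is cleaner and conceptually emphasizes that $\textbf{R}_t$ is just a ``symmetric power'' representation of $G_t$.
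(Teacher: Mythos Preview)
Your proposal is correct and follows essentially the same approach as the paper: invoke Theorem~\ref{multiplication} to reduce the identity for $\textbf{R}_t$ to the underlying $2\times 2$ identity $G_{t_1}G_{t_2}=G_{t_1+t_2}$, which is the angle-addition formula. Your write-up is more detailed than the paper's one-line proof, but the argument is the same.
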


\begin{proof} This is clear from Theorem~\ref{multiplication} and the fact that
$$\left( \begin{array}{cc} \cos(t_1) & \sin(t_1) \\ -\sin(t_1) & \cos(t_1)\ \end{array}\right)\left( \begin{array}{cc} \cos(t_2) & \sin(t_2) \\ -\sin(t_2) & \cos(t_2)\ \end{array}\right)=\left( \begin{array}{cc} \cos(t_1+t_2) & \sin(t_1+t_2) \\ -\sin(t_1+t_2) & \cos(t_1+t_2)\ \end{array}\right).$$

\end{proof}

\begin{Ex} For $d=4$ the matrix $\textbf{R}_t$ is the following.

{\fontsize{7}{3}\selectfont{
$$\left(
\begin{array}{ccccc}
\cos(t)^4 & 4\sin(t)\cos(t)^3 & 6\sin(t)^2\cos(t)^2 & 4\sin(t)^3\cos(t) & \sin(t)^4\\
-\sin(t)\cos(t)^3 & -3\sin(t)^2\cos(t)^2+\cos(t)^4 & -3\sin(t)^3\cos(t)+3\sin(t)\cos(t)^3 & -\sin(t)^4+3\sin(t)^2\cos(t)^2 & \sin(t)^3\cos(t)\\
\sin(t)^2\cos(t)^2 & 2\sin(t)^3\cos(t)-2\sin(t)\cos(t)^3 & \sin(t)^4-4\sin(t)^2\cos(t)^2+\cos(t)^4 & -2\sin(t)^3\cos(t)+2\sin(t)\cos(t)^3 & \sin(t)^2\cos(t)^2\\
-\sin(t)^3\cos(t) & -\sin(t)^4+3\sin(t)^2\cos(t)^2 & 3\sin(t)^3\cos(t)-3\sin(t)\cos(t)^3 & -3\sin(t)^2\cos(t)^2+\cos(t)^4 & \sin(t)\cos(t)^3\\
\sin(t)^4 & -4\sin(t)^3\cos(t) & 6\sin(t)^2\cos(t)^2 & -4\sin(t)\cos(t)^3 & \cos(t)^4\\
\end{array}
\right)$$
}}

\end{Ex}

\begin{Ex} For $d=4$ and $t=\pi/4$ we have
$$\textbf{R}_{\pi/4}=
\frac{1}{4}\left(
\begin{array}{ccccc}
1 & 4 & 6 & 4 & 1 \\
-1 & -2 & 0 & 2 & 1 \\
1 & 0 & -2 & 0 & 1 \\
-1 & 2 & 0 & -2 & 1 \\
1 & -4 & 6 & -4  & 1
\end{array}
\right)
$$
In particular, for a $4$--regular graph $G$ we have
$$F_G(0,0,1,0,0)=F_G\left(\frac{3}{2},0,-\frac{1}{2},0,\frac{3}{2}\right).$$
\end{Ex}

\begin{Rem}
The matrices $\textbf{R}_{\pi/4}$ are studied under the name Krawtchouk matrices. For more details on these matrices see the paper \cite{feinsilver2005krawtchouk}.

\end{Rem}

\subsection{An algebraic point of view}

Let $R=\mathbb{C}[x_0,\dots ,x_d]$ and $\partial$ is a derivation: a map satisfying $\partial(a+b)=\partial(a)+\partial(b)$ and $\partial(ab)=b\partial(a)+a\partial(b)$ for every $a,b\in R$. In general if we know that $\partial (x_k)=f_k$, then
$$\partial P=\sum_{k=0}^df_k\frac{d}{dx_k}P.$$

Having a derivation $\partial$ we can consider its ring of coefficients, that is, its kernel:
$$R^{\partial}=\{a\in R\ |\ \partial(a)=0\}.$$
This is indeed a ring.

For our goals we consider the derivation for which
$$\partial(x_k)=(d-k)x_{k+1}-kx_{k-1}.$$

\begin{Th}
Let $\partial $ be defined by
$$\partial P=\sum_{k=0}^d((d-k)x_{k+1}-kx_{k-1})\frac{d}{dx_k}P.$$
Then for any $d$--regular graph $G$ we have $\partial F_G(x_0,\dots ,x_d)=0$. In other words,\\
$F_G(x_0,\dots ,x_d)\in R^{\partial}$.
\end{Th}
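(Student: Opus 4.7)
The plan is to obtain $\partial F_G = 0$ as an infinitesimal consequence of the one-parameter family of identities $F_G(\mathbf{R}_t\underline{x}) = F_G(\underline{x})$ already established in the previous subsection. Concretely, I differentiate both sides with respect to $t$ at $t = 0$ and identify the resulting differential operator with $\partial$.

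First I would record the initial condition: since $\mathbf{R}_{t_1}\mathbf{R}_{t_2} = \mathbf{R}_{t_1+t_2}$ and $\mathbf{R}_0 = \mathbf{R}_{t}\mathbf{R}_{-t}\big|_{t=0}$ must act as the identity (this can also be read off directly from the matrix $G_0 = \mathrm{Id}$), we have $\widehat{a_r}(0) = x_r$ for every $r$. Next, since $F_G(\mathbf{R}_t\underline{x}) = F_G(\widehat{a_0}(t), \ldots, \widehat{a_d}(t))$ is constant in $t$, the chain rule gives
$$0 = \frac{d}{dt}F_G(\mathbf{R}_t\underline{x}) = \sum_{r=0}^d \frac{\partial F_G}{\partial x_r}(\mathbf{R}_t\underline{x}) \cdot \frac{d}{dt}\widehat{a_r}(t).$$
Evaluating at $t = 0$ and using the formula $\frac{d}{dt}\widehat{a_r}(t) = (d-r)\widehat{a_{r+1}}(t) - r\widehat{a_{r-1}}(t)$ from the preceding lemma, together with $\widehat{a_r}(0) = x_r$, I get
$$0 = \sum_{r=0}^d \bigl((d-r)x_{r+1} - r x_{r-1}\bigr)\frac{\partial F_G}{\partial x_r}(\underline{x}),$$
which is exactly $\partial F_G(x_0,\ldots,x_d)$ by definition.

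There is no real obstacle here; the only things to verify are the base case $\widehat{a_r}(0) = x_r$ (immediate from $G_0 = \mathrm{Id}$, or alternatively from Lemma on rotation matrix rows applied at $t=0$, which gives $\widehat{a_r}(0) = L(x^r) = x_r$) and the interchange of $d/dt$ with the finite sum defining $F_G$, which is automatic because $F_G$ is a polynomial and $\widehat{a_r}(t)$ is smooth in $t$. The conceptual content is simply that differentiating a one-parameter group action at the identity produces an element of the Lie algebra of derivations annihilating every invariant, and in our case this Lie algebra element is precisely $\partial$.
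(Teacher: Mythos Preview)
Your argument is correct. It is precisely the ``alternative proof'' that the paper itself points to in the Remark immediately following the theorem: differentiate the identity $F_G(\widehat{a_0}(t),\dots,\widehat{a_d}(t))=F_G(x_0,\dots,x_d)$ at $t=0$ and use the previously established recursion $\frac{d}{dt}\widehat{a_r}(t)=(d-r)\widehat{a_{r+1}}(t)-r\widehat{a_{r-1}}(t)$ together with $\widehat{a_r}(0)=x_r$.

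The paper's own proof, however, takes a different and more self-contained route. It works directly from the definition $F_G=\sum_{A\subseteq E(G)}\prod_{v}x_{d_A(v)}$, writes the positive part $\sum_k (d-k)x_{k+1}\frac{\partial}{\partial x_k}F_G$ as a sum of terms $T_+(A,v,e)$ over triples with $e\ni v$, $e\notin A$, and the negative part $\sum_k k x_{k-1}\frac{\partial}{\partial x_k}F_G$ as a sum of terms $T_-(A',u,e)$ with $e\ni u$, $e\in A'$, and then cancels them pairwise via the bijection $T_+(A,v,e)=T_-(A+e,u,e)$ for $e=(u,v)$. This argument does not invoke the gauge transformation machinery or the rotation matrices $\mathbf{R}_t$ at all. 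Your approach is shorter and more conceptual given the material already set up in the section, while the paper's direct bijection makes the combinatorial mechanism of cancellation explicit and stands independently of the analytic framework.
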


\begin{Rem}
Clearly, the theorem is motivated by the observation that $F_G(\widehat{a_0}(t),\dots ,\widehat{a_d}(t))$ is independent of $t$, and  for $\partial=\frac{d}{dt}$ we get the same relations for $\widehat{a_r}(t)$. This observation leads to an alternative proof of the above theorem.

\end{Rem}

\begin{proof}
We will simply use the definition $F_G(x_0,\dots ,x_d)=\sum_{A \subseteq E(G)}\prod_{v\in V(H)}x_{d_A(v)}$.
Let $e=(u,v)$ be an edge of $G$, and let $A$ be a subset of the edges of $G$ such that $e\notin A$. Let us introduce the notation 
$$T_{+}(A,v,e)=x_{d_A(v)+1}\prod_{w\in V(G) \atop w\neq v}x_{d_A(w)}.$$
Then
$$\sum_{A\subseteq E(G)}\sum_{v\in V(G)}\sum_{e\notin A \atop e\ni v}T_{+}(A,v,e)=\sum_{k=0}^d(d-k)x_{k+1}\frac{d}{dx_k}F_G(x_0,\dots ,x_d).$$
Similarly, let $e=(u,v)$ be an edge of $G$, and let $A'$ be a subset of the edges of $G$ such that $e\in A'$. Let us introduce
$$T_{-}(A',u,e)=x_{d_A(u)-1}\prod_{w\in V(G) \atop w\neq u}x_{d_A(w)}.$$
Then
$$\sum_{A'\subseteq E(G)}\sum_{u\in V(G)}\sum_{e\in A' \atop e\ni u}T_{-}(A,u,e)=\sum_{k=0}^dkx_{k-1}\frac{d}{dx_k}F_G(x_0,\dots ,x_d).$$
Next observe that $T_{+}(A,v,e)=T_{-}(A+e,u,e)$ by definition. Hence
$$\sum_{k=0}^d((d-k)x_{k+1}-kx_{k-1})\frac{d}{dx_k}F_G(x_0,\dots ,x_d)=0.$$
\end{proof}

\begin{Rem}
Note that $G$ does not necessarily be simple. For instance, for $d=4$, then a vertex with two loops shows that $x_0+2x_2+x_4$ is in the ring of coefficients.
\end{Rem}

Next we study the generators of $R^{\partial}$. Let $R_k$ be the set of homogeneous polynomials of degree $k$. This is a vector space on which $\partial $ acts as  a linear transformation. 

\begin{Lemma} \label{eigenvalue}
The eigenvalues $\partial$ on $R_1$  is $\lambda_k=(d-2k)i$ for $k=0,\dots ,d$. (Here $i=\sqrt{-1}$.)
\end{Lemma}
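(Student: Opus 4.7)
The plan is to identify $R_1$ with the space of bivariate homogeneous polynomials of degree $d$, in which $\partial$ becomes a familiar infinitesimal-rotation operator that is diagonalized by a standard complex change of coordinates.

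First I would introduce the linear isomorphism $\phi\colon\mathbb{C}[u,v]_d \to R_1$ defined on the monomial basis by $\phi(u^{d-k}v^k) = x_k$ for $k=0,1,\dots,d$, where $\mathbb{C}[u,v]_d$ denotes homogeneous polynomials of degree $d$ in two new variables $u,v$. From the formula $\partial(x_k)=(d-k)x_{k+1}-kx_{k-1}$ one reads off directly
\[
\phi^{-1}\partial\phi(u^{d-k}v^k)=(d-k)u^{d-k-1}v^{k+1}-ku^{d-k+1}v^{k-1},
\]
which is exactly $D(u^{d-k}v^k)$ for the first-order differential operator $D:=v\partial_u - u\partial_v$, the generator of infinitesimal rotations of the $(u,v)$-plane. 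Thus under $\phi$ the derivation $\partial$ on $R_1$ is conjugate to $D$ on $\mathbb{C}[u,v]_d$, and it suffices to find the eigenvalues of $D$.

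The second step is to diagonalize $D$ via the change of variables $w=u+iv$, $z=u-iv$. Inverting, $u=(w+z)/2$ and $v=(w-z)/(2i)$, and the chain rule gives $\partial_u=\partial_w+\partial_z$ and $\partial_v=i(\partial_w-\partial_z)$. A short computation then yields
\[
D = i(z\partial_z - w\partial_w),
\]
so $D(w^a z^b)=i(b-a)\,w^a z^b$. Hence $\{w^a z^b:a+b=d\}$ is an eigenbasis of $D$ on $\mathbb{C}[u,v]_d$ with eigenvalues $i(d-2a)$ as $a$ ranges over $0,1,\dots,d$. Transporting back by $\phi$, the elements $\phi((u+iv)^{d-k}(u-iv)^k)\in R_1$ form an eigenbasis of $\partial$ with eigenvalues $\lambda_k=(d-2k)i$, as claimed.

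The only nontrivial step is the short change-of-variables simplification bringing $D$ into the form $i(z\partial_z-w\partial_w)$; once this is in hand, the eigenvalues are read off from any monomial. Conceptually, the identification above makes $R_1$ into the $d$-th symmetric power of the defining two-dimensional representation of $SO(2)$, whose generator has eigenvalues $\pm i$ on $\mathbb{C}^2$, so one should indeed expect precisely the weights $(d-2k)i$ for $k=0,1,\dots,d$. I do not foresee any serious obstacle beyond this routine computation.
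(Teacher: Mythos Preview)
Your argument is correct and is essentially a homogeneous repackaging of the paper's own proof. The paper defers to Lemma~\ref{eigenvector}, which works with the univariate operator $TP = dzP(z) - (1+z^2)P'(z)$ on polynomials of degree at most $d$ (via $L(z^k)=x_k$) and checks directly, by the product rule, that $P_r(z)=(1+iz)^r(1-iz)^{d-r}$ satisfies $TP_r=(d-2r)i\,P_r$. Your bivariate operator $D = v\partial_u - u\partial_v$ dehomogenizes (set $u=1$, $v=z$) exactly to this $T$, and your eigenvectors $(u+iv)^a(u-iv)^b$ dehomogenize to the paper's $P_a$. So the two proofs exhibit the same eigenbasis; you reach it by a coordinate change that diagonalizes $D$, whereas the paper reaches it by a direct verification. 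Your framing has the virtue of making transparent that $R_1$ is the $d$-th symmetric power of the standard two-dimensional $SO(2)$-representation, which explains \emph{a priori} why the spectrum is $\{(d-2k)i\}$.

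One small slip in the final sentence: the element $\phi\bigl((u+iv)^{d-k}(u-iv)^k\bigr)=\phi(w^{d-k}z^k)$ has $a=d-k$, hence eigenvalue $i(d-2a)=(2k-d)i$, not $(d-2k)i$. This does not affect the lemma, since the set of eigenvalues is unchanged, but if you want the indexing to match the statement you should write $\phi\bigl((u+iv)^{k}(u-iv)^{d-k}\bigr)$ instead.
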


\begin{proof} See Lemma~\ref{eigenvector} for a more precise statement also giving the eigenvectors.

\end{proof}

Let $Q_k$ be the eigenvectors of $\partial$ belonging to $\lambda_k$, that is, $\partial(Q_k)=\lambda_kQ_k$. Since  the $\lambda_k$'s are different, the polynomials $Q_k$ induces $R_1$ as a vector space. In particular, each $x_k$ $k=0,\dots ,d$ can be written as a linear combinations of $Q_k$. This means that we can also write
$$\prod_{j=0}^dx_j^{\alpha_j}=\sum_{\underline{\beta}}c_{\underline{\alpha},\underline{\beta}}\prod_{j=0}^dQ_j^{\beta_j},$$
where the sum runs over the vectors $\underline{\beta}=(\beta_0,\dots ,\beta_d)$ for which $\sum_{j=0}^d\beta_j=\sum_{j=0}^d\alpha_j$. This means that 
$$R_k=\langle \prod_{j=0}^dQ_j^{\beta_j}\ |\ \sum_{j=0}^d\beta_j=k\rangle.$$
Furthermore, since $\dim(R_k)=|\{(\beta_0,\dots ,\beta_d)\ |\ \sum_{j=0}^d\beta_j=k\}|$ the set 
$$T_k=\left\{\prod_{j=0}^dQ_j^{\beta_j}\ |\ \sum_{j=0}^d\beta_j=k\right\}$$ 
is a basis of $R_k$. Note that if $\partial P_1=\mu_1 P_1$ and $\partial P_2=\mu_2 P_2$, then
$$\partial (P_1P_2)=\partial P_1\cdot P_2+P_1\cdot \partial P_2=(\mu_1+\mu_2)P_1P_2.$$
In particular,
$$\partial\left(\prod_{j=0}^dQ_j^{\beta_j}\right)=\left(\sum_{j=0}^d\beta_j\lambda_j\right)\prod_{j=0}^dQ_j^{\beta_j}.$$
Hence,
$$\partial \left(\sum_{\underline{\beta}}c_{\underline{\beta}}\prod_{j=0}^dQ_j^{\beta_j}\right)=\sum_{\underline{\beta}}c_{\underline{\beta}}\left(\sum_{j=0}^d\beta_j\lambda_j\right)\prod_{j=0}^dQ_j^{\beta_j}.$$
Since $T_k$ is a basis, this expression is equal to $0$ if and only if the coefficient $c_{\underline{\beta}}=0$ whenever $\sum_{j=0}^d\beta_j\lambda_j\neq 0$. Hence
$$R_k\cap R^{\partial}=\langle \prod_{j=0}^dQ_j^{\beta_j}\ |\ \sum_{j=0}^d\beta_j=k,\ \sum_{j=0}^d\beta_j\lambda_j=0\rangle.$$
Let
$$S=\left\{(\beta_0,\dots ,\beta_d)\ |\ \beta_j\in \mathbb{Z}_{\geq 0}\ \ (j=0,\dots ,d),\  \sum_{j=0}^d\beta_j\lambda_j=0\right\}.$$
Let us call a vector $\underline{\beta}\in S$ irreducible if there is no $(t_0,\dots ,t_d)\in S$ such that $t_k\leq \beta_k$ for all $k$ and $(t_0,\dots ,t_d)\neq (\beta_0,\dots ,\beta_d)$. 
Let 
$$S_0=\left\{(\beta_0,\dots ,\beta_d)\ |\ (\beta_0,\dots ,\beta_d)\ \mbox{is irreducible}\right\}.$$
This is a finite set since all $\beta_i\geq 0$ and if $(\beta_0,\dots ,\beta_d)$ and $(\beta'_0,\dots ,\beta'_d)$ are irreducible elements, then there must be coordinates $i$ and $j$ such that $\beta_i>\beta'_i$ and 
$\beta_j<\beta'_j$. It is folklore and can be proven by induction on $d$ that such sets are always finite. (Though there is no universal upper bound on the size of the set even for $d=2$.)
Clearly the set
$$\left\{\prod_{j=0}^dQ_j^{\beta_j}\ |\ \underline{\beta}\in S_0\right\}$$
generates $R^{\partial}$ as a ring. 

\begin{Ex}
Let $d=3$ and $\partial Q_{(3)}=3i\cdot Q_{(3)}$, $\partial Q_{(1)}=i\cdot Q_{(1)}$, $\partial Q_{(-1)}=-i\cdot Q_{(-1)}$ and $\partial Q_{(-3)}=-3i\cdot Q_{(-3)}$. Then the generators of the ring $R^{\partial}$ is $Q_{(3)}Q_{(-3)},Q_{(1)}Q_{(-1)},Q_{(3)}Q_{(-1)}^3$ and $Q_{(1)}^3Q_{(-3)}$. Here 
\begin{align*}
Q_{(3)}&=x_0-3i x_1-3x_2+ix_3,\\
Q_{(1)}&=x_0-i x_1+x_2-ix_3,\\
Q_{(-1)}&=x_0+i x_1+x_2+ix_3,\\
Q_{(-3)}&=x_0+3i x_1-3x_2-ix_3.
\end{align*}
\end{Ex}

\begin{Lemma} \label{eigenvector}
Let $L$ be the the linear operator for which in case of $P(z)=\sum_{k=0}^dc_kz^k$ we have $LP=\sum_{k=0}^dc_kx_k$. Let 
$$P_r(z)=(1+iz)^r(1-iz)^{d-r}.$$
Then $LP_r=Q_{(d-2r)}$, in other words, $\partial (LP_r)=(d-2r)i (LP_r)$.
\end{Lemma}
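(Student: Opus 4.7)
The plan is to transport the derivation $\partial$ to an operator on polynomials in a single variable $z$ via the linear isomorphism $L\colon z^k \mapsto x_k$. Writing an arbitrary polynomial $P(z) = \sum_k c_k z^k$ and invoking the defining relation $\partial(x_k) = (d-k)x_{k+1} - k x_{k-1}$, I would collect the coefficient of each $x_j$ in $\partial(LP)$ and, after reindexing, recognize the two resulting sums as $\sum_k (d-k) c_k z^k = dP(z) - zP'(z)$ and $\sum_k k c_k z^{k-1} = P'(z)$. This exhibits an intertwining $\partial \circ L = L \circ \Delta$ with
$$\Delta P(z) = dz\, P(z) - (z^2+1) P'(z).$$
Thus the lemma reduces to the one-variable eigenvalue identity $\Delta P_r = (d-2r)i\, P_r$ for $P_r(z) = (1+iz)^r (1-iz)^{d-r}$.

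To verify this identity I would apply the product rule to $P_r$, factor out $(1+iz)^{r-1}(1-iz)^{d-r-1}$, and simplify the bracketed remainder to $(2r-d)i + dz$, yielding
$$P_r'(z) = (1+iz)^{r-1}(1-iz)^{d-r-1}\bigl[(2r-d)i + dz\bigr].$$
The decisive observation is the factorization $z^2+1 = (1+iz)(1-iz)$: multiplying by it restores the two missing linear factors and gives $(z^2+1) P_r'(z) = P_r(z)\bigl[(2r-d)i + dz\bigr]$. Subtracting this from $dz\, P_r(z)$ cancels the $dz$ piece and leaves $\Delta P_r(z) = (d-2r)i\, P_r(z)$. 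Applying $L$ together with the intertwining relation yields $\partial(LP_r) = (d-2r)i \cdot LP_r$, which is exactly the asserted eigenvector relation.

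The only real obstacle is the initial bookkeeping: correctly shifting the two indices so that $\partial$ is recognised as the clean first-order operator $\Delta$. Once that is done the computation is transparent, and it simultaneously explains the choice of $P_r$: the ``potential'' $z^2+1$ sitting inside $\Delta$ has the roots $\pm i$, so its linear factors $(1+iz)$ and $(1-iz)$ are forced to be the building blocks of the eigenfunctions, with eigenvalue controlled only by how the total degree $d$ is split between them. As a bonus this also proves Lemma~\ref{eigenvalue}, since letting $r$ range over $0,1,\dots,d$ produces $d+1$ distinct eigenvalues $(d-2r)i$ for $\partial$ acting on the $(d+1)$-dimensional space $R_1$.
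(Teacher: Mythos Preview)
Your proof is correct and follows essentially the same route as the paper: both transport $\partial$ through $L$ to the first-order operator $TP = dzP - (1+z^2)P'$ on polynomials in $z$ and then verify $TP_r = (d-2r)i\,P_r$ by direct computation. Your presentation is arguably a touch cleaner in the final step, since invoking the factorization $1+z^2 = (1+iz)(1-iz)$ lets you write $(1+z^2)P_r' = P_r\cdot\bigl[(2r-d)i + dz\bigr]$ in one stroke, whereas the paper carries the factor $(1+iz)^{r-1}(1-iz)^{d-r-1}$ through an extra couple of lines of algebra before the same cancellation occurs.
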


\begin{proof}
For a polynomial $P(z)=\sum_{k=0}^dc_kz^k$ let
$$Q(z)=\sum_{k=0}^dc_k((d-k)z^{k+1}-kz^{k-1}).$$
Then $LQ(z)=\partial (LP)$. Naturally, 
$$Q(z)=dzP(z)-(1+z^2)P'(z).$$
So we can introduce the linear operator $T$ for which $TP=dzP(z)-(1+z^2)P'(z)$. So it is sufficient to prove that $TP_r=(d-2r)i\cdot P_r$. This is indeed true:
\begin{align*}
TP_r&=dzP_r(z)-(1+z^2)P_r'(z)\\
    &=dz(1+iz)^r(1-iz)^{d-r}-(1+z^2)(ri(1+iz)^{r-1}(1-iz)^{d-r}-(d-r)i(1+iz)^r(1-iz)^{d-r-1})\\
		&=(1+iz)^{r-1}(1-iz)^{d-r-1}(dz(1+z^2)-(1+z^2)(ri(1-iz)-(d-r)i(1+iz)))\\
		&=(1+iz)^{r-1}(1-iz)^{d-r-1}(dz(1+z^2)-(1+z^2)(dz-(d-2r)i))\\
		&=(1+iz)^{r-1}(1-iz)^{d-r-1}((d-2r)i(1+z^2))\\
		&=(d-2r)i\cdot (1+iz)^{r}(1-iz)^{d-r}\\
		&=(d-2r)i P_r(z).
\end{align*}
This completes the proof.
\end{proof}

\section{Duality theorem} \label{duality}

In this section we establish a connection between summing to subgraphs and summing to orientations. The main theorem of this section is the following.

\begin{Def}
Given an orientation $\mathcal{O}$ of the edges, the oriented degree $d_{\mathcal{O}}(v)$ is the out-degree minus the in-degree of the vertex $v$. (This is a number between $d$ and $-d$ having the same parity as $d$. The sum of the oriented degrees is always $0$.)
\end{Def}

\begin{Th} \label{orientations}
Let $G$ be a $d$--regular graph, and let us normalize $Q_{(k)}(x_0,\dots ,x_d)$ in such a way that the coefficient of $x_0$ is $1$ and it belongs to the eigenvalue $ki$, that is, $\partial Q_{(k)}=ki\cdot Q_{(k)}$. 
For any graph $G$ we have
$$F_G(x_0,\dots ,x_d)=\frac{1}{2^{e(G)}}\sum_{\mathcal{O}}\prod_{v\in V}Q_{(d_{\mathcal{O}}(v))}$$
\end{Th}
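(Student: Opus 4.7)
The plan is to use Lemma~\ref{eigenvector} to write $Q_{(k)}$ as an edge-local product, expand the right-hand side, and show that after summing over all orientations only terms indexed by subsets $A \subseteq E(G)$ survive, giving exactly $2^{e(G)}F_G$.

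First, by Lemma~\ref{eigenvector}, the normalized eigenvector belonging to eigenvalue $(d-2r)i$ is
$$Q_{(d-2r)} = L\bigl[(1+iz)^r(1-iz)^{d-r}\bigr].$$
Fix an orientation $\mathcal{O}$. At each vertex $v$ with in-degree $d^-(v)$ and out-degree $d^+(v)$, we have $d_\mathcal{O}(v)=d^+(v)-d^-(v) = d-2d^-(v)$, hence $r=d^-(v)$. The key observation is that the polynomial above factors over the edges incident to $v$:
$$(1+iz)^{d^-(v)}(1-iz)^{d^+(v)} = \prod_{e\ni v}\bigl(1+\varepsilon^\mathcal{O}_{v,e}\,iz\bigr),$$
where $\varepsilon^\mathcal{O}_{v,e}=+1$ if $e$ points into $v$ and $-1$ otherwise. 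Applying $L$, which sends $z^j$ to $x_j$, yields
$$Q_{(d_\mathcal{O}(v))} = \sum_{B_v\subseteq E(v)} i^{|B_v|}\Bigl(\prod_{e\in B_v}\varepsilon^\mathcal{O}_{v,e}\Bigr)x_{|B_v|}.$$

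Next I would expand the product $\prod_v Q_{(d_\mathcal{O}(v))}$ by choosing at each $v$ a subset $B_v\subseteq E(v)$, and then interchange the sums:
$$\sum_{\mathcal{O}}\prod_v Q_{(d_\mathcal{O}(v))} \;=\; \sum_{(B_v)_{v\in V}}\!\Bigl(\prod_v i^{|B_v|}x_{|B_v|}\Bigr)\sum_{\mathcal{O}}\prod_v\prod_{e\in B_v}\varepsilon^\mathcal{O}_{v,e}.$$
The inner sum factors over edges. For an edge $e=(u,v)$, let $a=\mathbf{1}[e\in B_u]$, $b=\mathbf{1}[e\in B_v]$. Summing over the two orientations of $e$ gives: $2$ if $a=b=0$, $-2$ if $a=b=1$, and $0$ if $a\neq b$ (since $\varepsilon^\mathcal{O}_{u,e}+\varepsilon^\mathcal{O}_{v,e}$ for the two orientations cancels in that case, while $\varepsilon^\mathcal{O}_{u,e}\varepsilon^\mathcal{O}_{v,e}=-1$ identically).

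Therefore only those $(B_v)$ with $B_u\cap e = B_v\cap e$ for every edge $e=(u,v)$ survive; equivalently, there is a subset $A\subseteq E(G)$ with $B_v=A\cap E(v)$ for each $v$, and then $|B_v|=d_A(v)$. Each such $A$ contributes a sign $(-2)^{|A|}\cdot 2^{e(G)-|A|}=2^{e(G)}(-1)^{|A|}$ from the edge sum, together with $\prod_v i^{|B_v|}=i^{\sum_v d_A(v)}=i^{2|A|}=(-1)^{|A|}$. The two signs cancel and we obtain
$$\sum_{\mathcal{O}}\prod_v Q_{(d_\mathcal{O}(v))} = 2^{e(G)}\sum_{A\subseteq E(G)}\prod_{v\in V}x_{d_A(v)} = 2^{e(G)}F_G(x_0,\dots,x_d),$$
which rearranges to the claim.

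The main obstacle is bookkeeping: tracking the factors of $i$ coming from the $L$-image of the monomials $z^j$, the signs $\varepsilon^\mathcal{O}_{v,e}$ attached to each chosen edge-endpoint, and the sign $(-1)^{|A|}$ produced by the edge-sum. The identities $\varepsilon^\mathcal{O}_{u,e}\varepsilon^\mathcal{O}_{v,e}=-1$ and $\sum_v d_A(v)=2|A|$ are precisely what makes these various signs cancel, leaving a clean factor of $2^{e(G)}$.
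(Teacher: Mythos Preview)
Your proof is correct. It differs from the paper's argument in that you bypass the gauge--transformation machinery entirely: instead of fixing a reference orientation, assigning the gauges $G_{\pm 1}=\begin{pmatrix}1&\mp i\\1&\pm i\end{pmatrix}$, and invoking the general gauge theorem together with Lemma~\ref{orientation_lemma}, you expand $Q_{(d_{\mathcal{O}}(v))}$ directly via Lemma~\ref{eigenvector} as $\sum_{B_v\subseteq E(v)} i^{|B_v|}(\prod_{e\in B_v}\varepsilon^{\mathcal{O}}_{v,e})x_{|B_v|}$ and then sum over $\mathcal{O}$ first. The orientation sum factors edge by edge and the identity $\varepsilon^{\mathcal{O}}_{u,e}\varepsilon^{\mathcal{O}}_{v,e}\equiv -1$ forces $B_u$ and $B_v$ to agree on $e$, recovering a global $A\subseteq E(G)$; the sign from the edges $(-1)^{|A|}$ then cancels against $i^{\sum_v d_A(v)}=(-1)^{|A|}$. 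This is exactly the gauge computation ``unpacked'': your $\varepsilon^{\mathcal{O}}_{v,e}$ plays the role of the entries of $G_{\pm 1}$, and the edge-by-edge cancellation is the identity $G_1G_{-1}^T=2\,\mathrm{Id}$. Your version is more self-contained and symmetric in the orientations (no reference $\mathcal{O}$ needed), while the paper's version makes explicit that the theorem is an instance of the general holographic/gauge reduction being advertised.
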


\begin{Lemma} \label{orientation_lemma}
Let $G_1:=\left( \begin{array}{cc} 1 & -i \\ 1 & i \end{array}\right)$, and $G_{-1}:=\left( \begin{array}{cc} 1 & i \\ 1 & -i \end{array}\right)$, where $i=\sqrt{-1}$ and the rows and columns are labelled by $0$ and $1$. Suppose that $f(\sigma_1,\dots ,\sigma_d)=x_r$ if $\sum_{k=1}^d\sigma_k=r$. For $\underline{\gamma}=(\gamma_1,\dots ,\gamma_d)\in \{-1,1\}^d$ and $\underline{\tau}=(\tau_1,\dots,\tau_d)\in \{0,1\}^d$ let
$$\widehat{f}_{G_{\underline{\gamma}}}(\tau_1,\dots,\tau_d)=\sum_{(\sigma_1,\dots ,\sigma_d)\in \{0,1\}^d}\prod_{k=1}^dG_{\gamma_k}(\tau_k,\sigma_k)\cdot f(\sigma_1,\dots,\sigma_d).$$
Then
$$\widehat{f}_{G_{\underline{\gamma}}}(\tau_1,\dots,\tau_d)=Q_{M(\underline{\gamma},\underline{\tau})}(x_0,\dots ,x_d),$$
where $M(\underline{\gamma},\underline{\tau})=-\sum_{k=1}^d\gamma_k(2\tau_k-1)$.
\end{Lemma}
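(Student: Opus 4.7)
The plan is to show that after a simple rewriting of the matrix entries $G_\gamma(\tau,\sigma)$, the hatted function reduces, term by term, to the polynomial $P_a(z)=(1+iz)^a(1-iz)^{d-a}$ from Lemma~\ref{eigenvector}, with $z^k$ replaced by $x_k$. That immediately identifies $\widehat{f}_{G_{\underline{\gamma}}}(\tau_1,\dots,\tau_d)$ as an eigenvector $Q_{(d-2a)}$ of the derivation $\partial$, for the correct value of $a$.

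First I would observe that the matrices $G_1$ and $G_{-1}$ have the uniform description
$$G_\gamma(\tau,\sigma)=\bigl(i\,\gamma\,(2\tau-1)\bigr)^{\sigma},\qquad \gamma\in\{-1,1\},\ \tau,\sigma\in\{0,1\},$$
as is seen by checking the four entries of each matrix. In particular, $G_\gamma(\tau,0)=1$ while $G_\gamma(\tau,1)\in\{i,-i\}$ according to the sign of $\gamma(2\tau-1)$. Substituting this into the definition of $\widehat{f}_{G_{\underline{\gamma}}}$ and using that $f(\sigma_1,\dots,\sigma_d)$ depends only on $|\sigma|:=\sum_k\sigma_k$, I obtain
$$\widehat{f}_{G_{\underline{\gamma}}}(\tau_1,\dots,\tau_d)=\sum_{\sigma\in\{0,1\}^d}\prod_{k=1}^{d}\bigl(i\gamma_k(2\tau_k-1)\bigr)^{\sigma_k}\,x_{|\sigma|}.$$

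Next, let $a=a(\underline{\gamma},\underline{\tau}):=\bigl|\{k:\gamma_k(2\tau_k-1)=1\}\bigr|$, so that $d-a$ indices $k$ satisfy $\gamma_k(2\tau_k-1)=-1$. Then
$$M(\underline{\gamma},\underline{\tau})=-\sum_{k=1}^{d}\gamma_k(2\tau_k-1)=-a+(d-a)=d-2a,$$
so the target is to prove $\widehat{f}_{G_{\underline{\gamma}}}=Q_{(d-2a)}$. Splitting each $\sigma\in\{0,1\}^d$ according to which coordinates belong to the two index sets, and writing $j$ (resp.\ $l$) for the number of $1$'s of $\sigma$ on the $a$-set (resp.\ the $(d-a)$-set), the previous display becomes
$$\widehat{f}_{G_{\underline{\gamma}}}(\tau_1,\dots,\tau_d)=\sum_{j=0}^{a}\sum_{l=0}^{d-a}\binom{a}{j}\binom{d-a}{l}\,i^{j}(-i)^{l}\,x_{j+l}.$$

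Finally, I compare this to the polynomial $P_a(z)=(1+iz)^a(1-iz)^{d-a}$ appearing in Lemma~\ref{eigenvector}. Expanding both factors by the binomial theorem yields
$$P_a(z)=\sum_{j=0}^{a}\sum_{l=0}^{d-a}\binom{a}{j}\binom{d-a}{l}i^{j}(-i)^{l}\,z^{j+l},$$
so the linear map $L:z^{k}\mapsto x_k$ gives exactly $LP_a=\widehat{f}_{G_{\underline{\gamma}}}(\tau_1,\dots,\tau_d)$. By Lemma~\ref{eigenvector}, $LP_a=Q_{(d-2a)}$, i.e.\ $Q_{M(\underline{\gamma},\underline{\tau})}$, which is the desired identity. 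The only potential obstacle is a sign/indexing slip in identifying the uniform expression for $G_\gamma(\tau,\sigma)$ and in keeping the roles of $a$ and $d-a$ matched with the correct factor $(1\pm iz)$, but once the uniform formula is fixed the rest is a direct term-by-term comparison.
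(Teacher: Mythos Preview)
Your proof is correct and follows essentially the same approach as the paper's. Both arguments rewrite $G_{\gamma}(\tau,\sigma)$ uniformly as $(i\gamma(2\tau-1))^{\sigma}$ (the paper writes this as $i^{\sigma\gamma\bar\tau}$ and then uses $i^{s}=si$ for $s\in\{-1,1\}$), identify the resulting expression with $L\bigl((1+iz)^a(1-iz)^{d-a}\bigr)$, and invoke Lemma~\ref{eigenvector}; the only cosmetic difference is that you expand the product into a double binomial sum before comparing, whereas the paper keeps it in factored form throughout.
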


\begin{proof}
Let $\overline{\tau}_k=2\tau_k-1$. Note that $\overline{\tau}_k\in \{-1,1\}$. Next observe that
$$G_{\gamma_k}(\tau_k,\sigma_k)=i^{\sigma_k\gamma_k\overline{\tau}_k}.$$
Let $z$ be a new variable and $L$ be a linear operator such that $L(z^k)=x_k$ for $k=0,\dots ,d$.  Then
$$\widehat{f}_{G_{\underline{\gamma}}}(\tau_1,\dots,\tau_d)=\sum_{(\sigma_1,\dots ,\sigma_d)\in \{0,1\}^d}\prod_{k=1}^di^{\sigma_k\gamma_k\overline{\tau}_k}L(z^{\sigma_1+\dots+\sigma_d})=L\left(\sum_{(\sigma_1,\dots ,\sigma_d)\in \{0,1\}^d}\prod_{k=1}^d(i^{\sigma_k\gamma_k\overline{\tau}_k}z^{\sigma_k})\right)=$$
$$=L\left(\prod_{k=1}^d(1+i^{\gamma_k\overline{\tau}_k}z)\right)=L\left(\prod_{k=1}^d(1+i\gamma_k\overline{\tau}_kz)\right)=Q_{M(\underline{\gamma},\underline{\tau})}(x_0,\dots ,x_d).$$
We use that if $s\in \{-1,1\}$, then $i^s=si$, and in the last step we used Lemma~\ref{eigenvector}.

\end{proof}

Now we are ready to prove Theorem~\ref{orientations}. It is just a simple application of gauge transformations.

\begin{proof}[Proof of Theorem~\ref{orientations}] 
Observe that $G_1G_{-1}^T=2\mathrm{Id}$. Let us fix an orientation $\mathcal{O}$ of the edges. To an oriented edges $(u,v)$ let $G_{uv}=\frac{1}{\sqrt{2}}G_1$ and $G_{vu}=\frac{1}{\sqrt{2}}G_{-1}$. For each vertex $v$ this gives a vector $\underline{\gamma}^v\in \{-1,1\}^d$: each edge oriented outward gives $1$, and each edge oriented inward gives a $-1$.

For any other orientation $\mathcal{O}'$ we can consider the set of edges, where $\mathcal{O}$ and $\mathcal{O}'$ gives different orientation of the edge. Identify this set with $\{e\ |\ \tau_e=1\}$. By the gauge transformation theorem with $\mathcal{X}=\mathcal{Y}=\{0,1\}$ and gauges above, we have
$$\sum_{\underline{\sigma}\in \mathcal{X}^E}\prod_{v\in V}f_v(\underline{\sigma}_{\partial v})=\sum_{\underline{\tau}\in \mathcal{Y}^E}\prod_{v\in V}\widehat{f_v}(\underline{\tau}_{\partial v}).$$
The left hand side is clearly $F_G(x_0,\dots ,x_d)$. Using Lemma~\ref{orientation_lemma} we know that the right hand side is
$$\frac{1}{2^{e(G)}}\sum_{\underline{\tau}\in \mathcal{Y}^E}\prod_{v\in V}Q_{M(\underline{\gamma}^v,\underline{\tau}_{\partial v})}(x_0,\dots ,x_d).$$
The last observation is that $M(\underline{\gamma}^v,\underline{\tau}_{\partial v})=d_{\mathcal{O}'}(v)$ for every vertex $v$ and orientation $\mathcal{O}'$. Indeed, if $\underline{\gamma}^v=(1,1,\dots ,1)$ and $\underline{\tau}=(0,\dots ,0)$, then every edge has outward orientation in $\mathcal{O}$, and $\mathcal{O}'$ agrees with $\mathcal{O}$. Then
$$M(\underline{\gamma}^v,\underline{\tau}_{\partial v})=-\sum_{k=1}^d1\cdot (2\cdot 0-1)=d=d_{\mathcal{O}'}(v).$$
Now it is easy to check that after every change in $\underline{\gamma}^v$ and $\underline{\tau}_{\partial v}$ the same change occurs in the left and right hand side.

\end{proof}

\section{Eulerian orientations and half-graphs} \label{Eulerian orientations}

In this section $d$ is even. The main theorem of this section is the following.

\begin{Th} \label{Euler orientations, regular}
Let $\underline{s}=(s_0,s_1,\dots ,s_d)$ be defined as follows.
$$s_k=\left\{ \begin{array}{cc} \frac{\binom{d}{d/2}\binom{d/2}{k/2}}{2^{d/2}\binom{d}{k}} & \mbox{if}\ \ k\ \ \mbox{is even}, \\
                                 0  & \mbox{if}\ \ k\ \ \mbox{is odd}.
																\end{array} \right.$$
Then $F_G(s_0,\dots ,s_d)$ counts the number of Eulerian orientations of a $d$--regular graph $G$.															
\end{Th}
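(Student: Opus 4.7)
The plan is to apply Theorem \ref{orientations}, which expresses
$$F_G(\underline{x})=\frac{1}{2^{e(G)}}\sum_{\mathcal{O}}\prod_{v\in V(G)}Q_{(d_{\mathcal{O}}(v))}(\underline{x}),$$
and to show that at $\underline{x}=\underline{s}$ only the Eulerian orientations, i.e.\ those with $d_{\mathcal{O}}(v)=0$ for every $v$, survive on the right-hand side. Since $e(G)=nd/2$, it suffices to establish
\begin{equation*}
Q_{(d-2r)}(\underline{s})=\begin{cases}2^{d/2}&\text{if }r=d/2,\\0&\text{if }r\in\{0,1,\dots,d\},\ r\neq d/2,\end{cases}
\end{equation*}
because then every Eulerian orientation contributes $(2^{d/2})^{n}/2^{nd/2}=1$ while every other orientation is killed by at least one vanishing factor.

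The key step is the identity above. I would prove it by recognising $s_k$ as the trigonometric integral
$$s_k=\frac{2^{d/2}}{\pi}\int_{-\pi/2}^{\pi/2}\sin^k(t)\cos^{d-k}(t)\,dt.$$
For odd $k$ this vanishes by parity of the integrand, recovering $s_k=0$; for $k=2j$ even it matches the given closed form via the standard Beta-function evaluation of $\int_0^{\pi/2}\sin^{2j}t\cos^{d-2j}t\,dt$ together with $\Gamma(m+\tfrac12)=(2m)!\sqrt{\pi}/(4^m m!)$. This matching is the only real computation in the argument and is the main obstacle; it is pure factorial bookkeeping.

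Once the integral representation of $\underline{s}$ is in hand, Lemma \ref{eigenvector} gives $Q_{(d-2r)}(\underline{x})=\sum_k x_k\,[z^k](1+iz)^r(1-iz)^{d-r}$, and combining linearity with the elementary identity $\sum_k\sin^k(t)\cos^{d-k}(t)\cdot[z^k]P(z)=\cos^d(t)\,P(\tan t)$ yields
\begin{align*}
Q_{(d-2r)}(\underline{s})&=\frac{2^{d/2}}{\pi}\int_{-\pi/2}^{\pi/2}\cos^d(t)\,(1+i\tan t)^r(1-i\tan t)^{d-r}\,dt\\
&=\frac{2^{d/2}}{\pi}\int_{-\pi/2}^{\pi/2}e^{i(2r-d)t}\,dt,
\end{align*}
where the second equality uses $1\pm i\tan t=e^{\pm it}/\cos t$. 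Since $d$ is even, $2r-d$ is an even integer, so this last integral equals $\pi$ when $r=d/2$ and vanishes otherwise, which is exactly what the identity demands.

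Conceptually, $\underline{s}$ is engineered so that the functions $P_r(\tan t)\cos^d(t)=e^{i(2r-d)t}$ form an orthogonal system on $[-\pi/2,\pi/2]$ that picks out the $r=d/2$ mode by Fourier orthogonality, and this mode is exactly the Eulerian contribution. Beyond the factorial check that produces the integral representation, every other step is formal manipulation of identities already established earlier in the paper.
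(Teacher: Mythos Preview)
Your proof is correct and takes a genuinely different route from the paper's. Both proofs invoke Theorem~\ref{orientations} and reduce to showing $Q_{(d-2r)}(\underline{s})=0$ for $r\neq d/2$ together with the correct normalisation at $r=d/2$. The paper does this algebraically: it checks that $\underline{s}$ is the right eigenvector of the tridiagonal matrix $A^{(d)}$ for eigenvalue $0$, then uses the orthogonality of left and right eigenvectors for distinct eigenvalues (the Lemma just before the proof) to kill every $Q_{(j)}$ with $j\neq 0$, and finally evaluates $Q_{(0)}(\underline{s})$ by the binomial identity $\sum_t\binom{2t}{t}\binom{2(d/2-t)}{d/2-t}=4^{d/2}$. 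You instead give an analytic representation $s_k=\frac{2^{d/2}}{\pi}\int_{-\pi/2}^{\pi/2}\sin^k t\,\cos^{d-k}t\,dt$, push the linear map $L$ under the integral, and reduce everything to the Fourier orthogonality of $e^{i(2r-d)t}$ on $[-\pi/2,\pi/2]$; this handles the vanishing and the normalisation simultaneously. One small cosmetic point: when you write $\cos^d(t)(1+i\tan t)^r(1-i\tan t)^{d-r}$, the factorisation $(\cos t+i\sin t)^r(\cos t-i\sin t)^{d-r}=e^{i(2r-d)t}$ makes it clear there is no issue at $t=\pm\pi/2$. Your integral representation also gives a pleasant conceptual explanation of the vector $\underline{s}$ (it is a moment sequence), whereas the paper's argument sits more naturally inside the invariant-theory framework built in Section~\ref{symmetric functions} and explains \emph{why} one should look at an eigenvector of $A^{(d)}$.
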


\begin{Ex}
For a $4$--regular graph $F_G(\frac{3}{2},0,\frac{1}{2},0,\frac{3}{2})$ counts the number of Eulerian orientations. For a $6$--regular graph $F_G(\frac{20}{8},0,\frac{5}{8},0,\frac{5}{8},0, \frac{20}{8})$, for an $8$--regular graph \\ $F_G(\frac{70}{16},0,\frac{14}{16},0,\frac{42}{80},0, \frac{14}{16},0,\frac{70}{16})$ counts the number of Eulerian orientations.
\end{Ex}

The non-regular version is exactly what one would expect.

\begin{Th} \label{Euler orientations, non-regular}
For an even $d$ let $\underline{s}^{(d)}=(s^{(d)}_0,s^{(d)}_1,\dots ,s^{(d)}_d)$ be defined as follows.
$$s^{(d)}_k=\left\{ \begin{array}{cc} \frac{\binom{d}{d/2}\binom{d/2}{k/2}}{2^{d/2}\binom{d}{k}} & \mbox{if}\ \ k\ \ \mbox{is even}, \\
                                 0  & \mbox{if}\ \ k\ \ \mbox{is odd}.
																\end{array} \right.$$
Let $\underline{s}$ be the vector that we get if we substitute $\underline{s}^{(d)}$ into $x^{v}_0,\dots ,x^{v}_{d_v}$ if $d_v=d$.
Then $F_G(\underline{s})$ counts the number of Eulerian orientations of a graph $G$.															
\end{Th}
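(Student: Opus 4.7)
The plan is to combine Theorem~\ref{orientations} with a direct evaluation of the eigenvector polynomials $Q_{(k)}$ at the proposed point $\underline{s}^{(d)}$. Since $F_G(\underline{x})$ is multilinear in the blocks of variables attached to different vertices, and the gauges in the proof of Theorem~\ref{orientations} act on each half-edge independently, the non-regular statement will follow as an immediate vertex-by-vertex strengthening of the regular one, so the essential work is done at a single vertex of degree $d$. Invoking Theorem~\ref{orientations} gives
$$F_G\bigl(\underline{s}^{(d)}\bigr)=\frac{1}{2^{e(G)}}\sum_{\mathcal{O}}\prod_{v\in V}Q_{(d_{\mathcal{O}}(v))}\bigl(\underline{s}^{(d)}\bigr),$$
and the task will reduce to the single-vertex identity
$$Q_{(k)}\bigl(\underline{s}^{(d)}\bigr)=\begin{cases}2^{d/2}, & k=0,\\ 0, & k\neq 0,\end{cases}\qquad k\in\{-d,-d+2,\dots,d\}. \quad (\star)$$
Once $(\star)$ holds, only Eulerian orientations survive, each contributing $\prod_v 2^{d/2}/2^{e(G)}=1$ (using $\sum_v d=2e(G)$), so $F_G(\underline{s}^{(d)})=\varepsilon(G)$.

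To attack $(\star)$, I will invoke Lemma~\ref{eigenvector}, which identifies $Q_{(d-2r)}(\underline{x})=L\bigl((1+iz)^r(1-iz)^{d-r}\bigr)$. The polynomials $\{(1+iz)^r(1-iz)^{d-r}\}_{r=0}^{d}$ form a basis of the space of polynomials in $z$ of degree at most $d$, so $(\star)$ is a non-singular linear system that uniquely determines $\underline{s}^{(d)}$. Setting $u=1+iz$, $v=1-iz$ (so $u+v=2$ and $z=(u-v)/(2i)$) and multiplying by $1=((u+v)/2)^{d-k}$ to raise the total degree to $d$, a short computation expands
$$z^k=\frac{(-i)^k}{2^d}(u-v)^k(u+v)^{d-k}=\frac{i^k}{2^d}\sum_{r=0}^{d}\bigl[y^r\bigr]\bigl((1-y)^k(1+y)^{d-k}\bigr)\,(1+iz)^r(1-iz)^{d-r}.$$
Applying $L$ and imposing $(\star)$ then forces the candidate value
$$s^{(d)}_k=\frac{i^k}{2^{d/2}}\bigl[y^{d/2}\bigr]\bigl((1-y)^k(1+y)^{d-k}\bigr),\quad(\star\star)$$
and the remaining job is to check that the $s^{(d)}_k$ prescribed in the theorem really satisfy $(\star\star)$.

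The verification of $(\star\star)$ splits in two cases. If $k$ is odd, the polynomial $Y(y)=(1-y)^k(1+y)^{d-k}$ satisfies $y^d Y(1/y)=(-1)^kY(y)=-Y(y)$, so its coefficient sequence is anti-palindromic; in particular the middle coefficient $[y^{d/2}]Y$ vanishes, matching $s^{(d)}_k=0$. For $k=2m$, using $i^{2m}=(-1)^m$ the identity $(\star\star)$ reduces to
$$(-1)^m\bigl[y^{d/2}\bigr]\bigl((1-y)^{2m}(1+y)^{d-2m}\bigr)=\frac{\binom{d}{d/2}\binom{d/2}{m}}{\binom{d}{2m}}.$$
This combinatorial identity is the main obstacle I expect. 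I plan to prove it by forming the generating function in $m$: multiplying by $\binom{d}{2m}z^m$ and summing, factoring $(1+y)^d$ out of the inner sum, and using $\sum_m\binom{d}{2m}t^{2m}=\tfrac{1}{2}[(1+t)^d+(1-t)^d]$ with $t=\sqrt{-z}\cdot(1-y)/(1+y)$, the left-hand side collapses to
$$\bigl[y^{d/2}\bigr]\,\tfrac{1}{2}\Bigl[\bigl((1+\sqrt{-z})+(1-\sqrt{-z})y\bigr)^{d}+\bigl((1-\sqrt{-z})+(1+\sqrt{-z})y\bigr)^{d}\Bigr].$$
Each bracket is of the form $(A+By)^d$ with $AB=(1+\sqrt{-z})(1-\sqrt{-z})=1+z$, and $[y^{d/2}](A+By)^d=\binom{d}{d/2}(AB)^{d/2}=\binom{d}{d/2}(1+z)^{d/2}$; so the two pieces coincide and the sum equals $\binom{d}{d/2}(1+z)^{d/2}=\binom{d}{d/2}\sum_m\binom{d/2}{m}z^m$, matching the right-hand side coefficient by coefficient. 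This will establish $(\star)$, and hence the theorem.
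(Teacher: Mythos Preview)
Your argument is correct, and it reaches the same pivotal identity as the paper---namely that $Q_{(j)}(\underline{s}^{(d)})=2^{d/2}\delta_{j,0}$---but by a genuinely different route. The paper establishes the vanishing $Q_{(j)}(\underline{s})=0$ for $j\neq 0$ structurally: it checks that $\underline{s}$ is the right $0$-eigenvector of the tridiagonal matrix $A^{(d)}$ (equivalently, that the recurrence $(d-k)s_{k+1}=ks_{k-1}$ holds), and then invokes the orthogonality of left and right eigenvectors with distinct eigenvalues to kill all $Q_{(j)}$ with $j\neq 0$; the value $Q_{(0)}(\underline{s})$ is then computed separately via the convolution $\sum_t\binom{2t}{t}\binom{d-2t}{d/2-t}=2^{d}$. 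You instead invert the change of basis $x_k\leftrightarrow Q_{(d-2r)}$ explicitly through the substitution $u=1+iz$, $v=1-iz$, which produces both the vanishing and the normalization simultaneously from the single coefficient formula $(\star\star)$, and your generating-function verification of $(\star\star)$ is clean and self-contained. The trade-off: the paper's proof explains \emph{why} this particular vector is singled out (it is the kernel of $\partial$ acting on $R_1$, tying into the invariant-theory viewpoint of Section~\ref{symmetric functions}), whereas your argument is more direct and bypasses the auxiliary matrix $A^{(d)}$ and the associated eigenvector lemma entirely, at the cost of one extra combinatorial identity.
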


Before we start to prove Theorem~\ref{Euler orientations, regular} and Corollary~\ref{Euler orientations, non-regular} we give the corresponding statement for the number of half-graphs.

\begin{Th} \label{half-graphs, regular}
Let $\underline{c}=(c_0,c_1,\dots ,c_d)$ be defined as follows.
$$c_k=\left\{ \begin{array}{cc} (-1)^{k/2}\frac{\binom{d}{d/2}\binom{d/2}{k/2}}{2^{d/2}\binom{d}{k}} & \mbox{if}\ \ k\ \ \mbox{is even}, \\
                                 0  & \mbox{if}\ \ k\ \ \mbox{is odd}.
																\end{array} \right.$$
Then $F_G(c_0,\dots ,c_d)$ counts the number of half-graphs of a $d$--regular graph $G$.															
\end{Th}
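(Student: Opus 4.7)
The plan is to identify $\underline c$ as a specific column of the Krawtchouk-type rotation matrix $\textbf{R}_{\pi/4}$; the theorem will then follow in a single line from the rotation-invariance of $F_G$.

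Let $\underline y\in\mathbb{R}^{d+1}$ be the standard basis vector with $y_{d/2}=1$ and $y_k=0$ for $k\ne d/2$. Directly from the definition of $F_G$,
\[
F_G(\underline y) \;=\; \sum_{A\subseteq E(G)}\prod_{v\in V(G)} y_{d_A(v)} \;=\; \#\{A\subseteq E(G):d_A(v)=d/2 \text{ for all }v\} \;=\; h(G).
\]
Section~\ref{symmetric functions} established the invariance $F_G(\textbf{R}_t\underline x)=F_G(\underline x)$ for all $t\in\mathbb{R}$ (simply the gauge $G_t$ applied uniformly on every edge). Therefore the theorem reduces to the purely algebraic statement
\[
\underline c \;=\; \textbf{R}_{\pi/4}\,\underline y,
\]
because then $F_G(\underline c) = F_G(\textbf{R}_{\pi/4}\underline y) = F_G(\underline y) = h(G)$.

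By Lemma~\ref{rotation matrix rows}, the $(r,d/2)$-entry of $\textbf{R}_{\pi/4}$ equals $2^{-d/2}\bigl[x^{d/2}\bigr](x-1)^r(x+1)^{d-r}$, so it suffices to verify the coefficient identity
\[
c_r \;=\; \frac{1}{2^{d/2}}\,\bigl[x^{d/2}\bigr](x-1)^r(x+1)^{d-r} \qquad (0\le r\le d).
\]
I would prove this by contour integration on $|x|=1$. With $x=e^{i\theta}$ and the half-angle identities $x-1=2i\sin(\theta/2)e^{i\theta/2}$, $x+1=2\cos(\theta/2)e^{i\theta/2}$, Cauchy's integral formula together with the substitution $\phi=\theta/2$ gives
\[
\bigl[x^{d/2}\bigr](x-1)^r(x+1)^{d-r} \;=\; \frac{2^{d}\,i^{r}}{\pi}\int_0^{\pi}\sin^r\phi\,\cos^{d-r}\phi\,d\phi.
\]
When $r$ is odd, $d-r$ is also odd (as $d$ is even), so the integrand is antisymmetric under $\phi\mapsto\pi-\phi$ and the integral vanishes---matching $c_r=0$. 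When $r=2a$ and $d-r=2b$, the integral is the classical Beta value $\pi(2a)!(2b)!/\bigl(4^{a+b}a!\,b!\,(a+b)!\bigr)$; combined with $i^{2a}=(-1)^a$ and $2^d=4^{a+b}$ this yields
\[
\bigl[x^{d/2}\bigr](x-1)^{2a}(x+1)^{2b} \;=\; (-1)^a\,\frac{(2a)!(2b)!}{a!\,b!\,(a+b)!},
\]
and a routine binomial simplification ($\binom{d}{d/2}\binom{d/2}{a}/\binom{d}{2a}=(2a)!(2b)!/(a!\,b!\,(a+b)!)$ when $a+b=d/2$) confirms this equals $2^{d/2}c_{2a}$.

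The main---and essentially only---obstacle is this single coefficient identity; everything else is a clean invocation of the rotation invariance from Section~\ref{symmetric functions}. An alternative verification would expand $c_k$ in the eigenbasis $\{Q_{(k)}\}$ of $\partial$ and invoke the Remark after Lemma~\ref{rotation matrix rows} with $\lambda_j=\pm i$, but the contour-integral calculation is the most transparent.
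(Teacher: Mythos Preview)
Your proof is correct and follows the same strategy as the paper: reduce to the identity $\underline{c}=\textbf{R}_{\pi/4}\mathbf{e}_{d/2}$ and then verify the $(r,d/2)$ entries of $\textbf{R}_{\pi/4}$. The only difference is in the verification of the coefficient $[x^{d/2}](x-1)^r(x+1)^{d-r}$: the paper expands directly by the binomial theorem, rewrites the resulting sum as $(-1)^k\binom{d}{d/2}\binom{d}{k}^{-1}\sum_j(-1)^j\binom{d/2}{j}\binom{d/2}{k-j}$, and recognizes the inner sum as $[x^k](1-x^2)^{d/2}$, whereas you use a contour integral and the Beta-function evaluation---both are standard and lead to the same closed form.
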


\begin{Ex}
For a $4$--regular graph $F_G(\frac{3}{2},0,-\frac{1}{2},0,\frac{3}{2})$ counts the number of half-graphs. For a $6$--regular graph $F_G(\frac{20}{8},0,-\frac{5}{8},0,\frac{5}{8},0, -\frac{20}{8})$, for an $8$--regular graph \\ $F_G(\frac{70}{16},0,-\frac{14}{16},0,\frac{42}{80},0, -\frac{14}{16},0,\frac{70}{16})$ counts the number of half-graphs.
\end{Ex}

The non-regular version is exactly what one would expect.

\begin{Th} \label{half-graphs, non-regular}
For an even $d$ let $\underline{c}^{(d)}=(c^{(d)}_0,c^{(d)}_1,\dots ,c^{(d)}_d)$ be defined as follows.
$$c^{(d)}_k=\left\{ \begin{array}{cc} (-1)^{k/2}\frac{\binom{d}{d/2}\binom{d/2}{k/2}}{2^{d/2}\binom{d}{k}} & \mbox{if}\ \ k\ \ \mbox{is even}, \\
                                 0  & \mbox{if}\ \ k\ \ \mbox{is odd}.
																\end{array} \right.$$
Let $\underline{c}$ be the vector that we get if we substitute $\underline{c}^{(d)}$ into $x^{v}_0,\dots ,x^{v}_{d_v}$ if $d_v=d$.
Then $F_G(\underline{c})$ counts the number of half-graphs of a graph $G$.															
\end{Th}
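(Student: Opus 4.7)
The plan is to realize $h(G)$ directly as the partition function of a suitable normal factor graph, and then transport the evaluation point to $\underline{c}$ via a single uniform gauge transformation. First I would introduce the binary normal factor graph $\mathcal{H}$ in which $f_v(\sigma_1,\dots,\sigma_{d_v}) = 1$ if $\sum_i \sigma_i = d_v/2$ and $0$ otherwise; identifying $\sigma \in \{0,1\}^E$ with a subgraph $A \subseteq E(G)$ yields immediately $Z(\mathcal{H}) = h(G)$. Each $f_v$ is symmetric, with input sequence $a^{(v)}_k = \delta_{k,\, d_v/2}$, so the machinery of Section~\ref{symmetric functions} applies at every vertex. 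Next, put the same gauge $G_{\pi/4}$ on every half-edge; since $G_{\pi/4}^T G_{\pi/4} = I$, the Chertkov--Chernyak theorem yields $Z(\widehat{\mathcal{H}}) = Z(\mathcal{H}) = h(G)$. By Lemma~\ref{rotation matrix rows} applied with the specialization $x_k = \delta_{k,\, d_v/2}$ (so that $L$ extracts the coefficient of $x^{d_v/2}$), the transformed vertex function $\widehat{f_v}$ is still symmetric and takes the value
$$\widehat{a_r}(\pi/4) = \frac{1}{2^{d_v/2}}\,[x^{d_v/2}]\,(x-1)^r(x+1)^{d_v-r}$$
whenever $\sum_i \tau_i = r$. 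Thus $Z(\widehat{\mathcal{H}}) = \sum_{A\subseteq E}\prod_v \widehat{a_{d_A(v)}}(\pi/4)$, and the whole proof reduces to the identity $\widehat{a_r}(\pi/4) = c^{(d_v)}_r$.

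This identity is a central Krawtchouk evaluation and is the only substantive computation in the argument. The slickest route is a generating function in $r$: one computes
\begin{align*}
\sum_{r=0}^{d_v}\binom{d_v}{r}\widehat{a_r}(\pi/4)\,y^r
&= \frac{1}{2^{d_v/2}}\,[x^{d_v/2}]\sum_r \binom{d_v}{r}\bigl(y(x-1)\bigr)^r(x+1)^{d_v-r}\\
&= \frac{1}{2^{d_v/2}}\,[x^{d_v/2}]\bigl((1+y)x + (1-y)\bigr)^{d_v}\\
&= \frac{\binom{d_v}{d_v/2}}{2^{d_v/2}}\,(1-y^2)^{d_v/2}.
\end{align*}
Reading off coefficients: for odd $r$ one gets $\widehat{a_r}(\pi/4) = 0 = c^{(d_v)}_r$, while for $r = 2m$ one gets
$$\widehat{a_r}(\pi/4) = \frac{(-1)^m \binom{d_v}{d_v/2}\binom{d_v/2}{m}}{2^{d_v/2}\binom{d_v}{r}} = c^{(d_v)}_r,$$
exactly as required.

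Combining with $Z(\widehat{\mathcal H}) = h(G)$ yields $F_G(\underline{c}) = h(G)$, which is the theorem. The regular case (Theorem~\ref{half-graphs, regular}) is the specialization to constant $d_v = d$, so no separate argument is needed; the non-regular version comes for free because the gauge $G_{\pi/4}$ is the \emph{same} at every half-edge, and the degree dependence is entirely absorbed into the vertex-level application of Lemma~\ref{rotation matrix rows}. The main obstacle is the Krawtchouk identity above; everything else is bookkeeping inside the gauge framework of Sections~\ref{normal factor graph} and \ref{symmetric functions}.
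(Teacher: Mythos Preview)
Your proof is correct and follows essentially the same route as the paper: apply the uniform rotation gauge $G_{\pi/4}$ to the obvious normal factor graph encoding $h(G)$, then identify the transformed vertex values with $c^{(d_v)}_r$ by computing $\frac{1}{2^{d_v/2}}[x^{d_v/2}](x-1)^r(x+1)^{d_v-r}$. The only difference is in how this last coefficient is evaluated: the paper expands it directly as a Vandermonde-type sum and then recognises $\sum_j(-1)^j\binom{d/2}{j}\binom{d/2}{k-j}$ as the coefficient of $x^k$ in $(1-x^2)^{d/2}$, while you pass to the generating function $\sum_r\binom{d_v}{r}\widehat{a_r}(\pi/4)\,y^r$ and collapse it by the binomial theorem---a slightly slicker but equivalent manipulation.
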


Before we start to prove the above theorems we collect some simple observations in a lemma.

\begin{Lemma} Let $A^{(d)}$ be the matrix of size $(d+1)\times (d+1)$ with rows and columns labelled by $0,1,\dots ,d$ such that $A^{(d)}_{k,k+1}=d-k$ for $k=0,\dots ,d$ and $A^{(d)}_{k,k-1}=-k$ for $k=1,\dots ,d$. Then $\underline{b}=(b_0,\dots ,b_d)$ satisfies that $\underline{b} A^{(d)}=\lambda \underline{b}$ for some $\lambda$ if and only if the polynomial $Q_{\underline{b}}(x_0,\dots ,x_d)=\sum_{k=0}^db_kx_k$ satisfies that $\partial Q=\lambda Q$. Furthermore, if $\underline{c}=(c_0,\dots,c_d)$ such that $A^{(d)}\underline{c}=\mu \underline{c}$ for some $\mu\neq \lambda$, then $Q_{\underline{b}}(c_0,\dots ,c_d)=0$.
\end{Lemma}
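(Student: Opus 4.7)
The plan is to observe that both equations $\underline{b} A^{(d)} = \lambda \underline{b}$ and $\partial Q_{\underline{b}} = \lambda Q_{\underline{b}}$ encode exactly the same three-term recursion among the coordinates $b_0, \dots, b_d$, so the equivalence comes down to comparing coefficients of $x_j$. The ``furthermore'' part is then just the standard orthogonality between a left and a right eigenvector belonging to distinct eigenvalues of an arbitrary (not necessarily symmetric) matrix.

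For the first claim, I would compute both sides explicitly and check that they produce the same linear system. Using $\partial x_k = (d-k) x_{k+1} - k x_{k-1}$ and collecting the coefficient of $x_j$ on the right-hand side of $\partial Q_{\underline{b}} = \sum_k b_k \partial x_k$, one obtains contributions only from $k = j-1$ (giving $(d-j+1) b_{j-1}$) and from $k = j+1$ (giving $-(j+1) b_{j+1}$). Hence
$$[x_j]\, \partial Q_{\underline{b}} \;=\; (d-j+1)\, b_{j-1} - (j+1)\, b_{j+1},$$
where by convention $b_{-1} = b_{d+1} = 0$. On the other hand, the $j$-th entry of the row vector $\underline{b} A^{(d)}$ is $\sum_k b_k A^{(d)}_{k,j}$; since the only nonzero entries in column $j$ are $A^{(d)}_{j-1,j} = d - (j-1)$ and $A^{(d)}_{j+1,j} = -(j+1)$, this entry equals the same expression $(d-j+1) b_{j-1} - (j+1) b_{j+1}$. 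Comparing with $\lambda b_j$ on both sides yields the desired equivalence.

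For the ``furthermore'' part, I would apply the standard trick of evaluating the product $\underline{b}\, A^{(d)}\, \underline{c}$ in two ways: as $(\underline{b} A^{(d)})\, \underline{c} = \lambda\, (\underline{b} \cdot \underline{c})$ using the left-eigenvector equation, and as $\underline{b}\, (A^{(d)} \underline{c}) = \mu\, (\underline{b} \cdot \underline{c})$ using the right-eigenvector equation. Since $\lambda \neq \mu$, we conclude $\underline{b} \cdot \underline{c} = \sum_k b_k c_k = Q_{\underline{b}}(c_0, \dots, c_d) = 0$. I do not anticipate any real obstacle here: the entire lemma is a direct coefficient match plus one line of linear algebra, and the only minor bookkeeping concerns the boundary indices $j = 0$ and $j = d$, which are absorbed by the convention $b_{-1} = b_{d+1} = 0$ coming from the fact that $A^{(d)}$ is a genuine $(d+1) \times (d+1)$ matrix.
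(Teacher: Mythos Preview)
Your proposal is correct and follows essentially the same approach as the paper: the paper calls the first claim ``practically a tautology'' and for the second claim uses exactly your argument, computing $\underline{b}\,A^{(d)}\,\underline{c}$ two ways to conclude $(\underline{b},\underline{c})=0$. Your explicit coefficient comparison simply spells out what the paper leaves implicit.
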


\begin{proof} The claim that $\underline{b}=(b_0,\dots ,b_d)$ satisfies that $\underline{b} A^{(d)}=\lambda \underline{b}$ for some $\lambda$ if and only if the polynomial $Q_{\underline{b}}(x_0,\dots ,x_d)=\sum_{k=0}^db_kx_k$ satisfies that $\partial Q=\lambda Q$ is practically a tautology. The second statement that if $\underline{c}=(c_0,\dots,c_d)$ such that $A^{(d)}\underline{c}=\mu \underline{c}$ for some $\mu\neq \lambda$, then $Q(c_0,\dots ,c_d)=0$ follows from the following argument: $\lambda (\underline{b},\underline{c})=\underline{b} A^{(d)}\underline{c}=\mu (\underline{b},\underline{c})$ implies that 
$(\underline{b},\underline{c})=0$ since $\lambda\neq \mu$. This is equivalent with $Q(c_0,\dots ,c_d)=0$.

\end{proof}

\begin{Ex}
$$A^{(3)}=\left(\begin{array}{cccc}
0 & 3 & 0 & 0 \\
-1 & 0 & 2 & 0 \\
0 & -2 & 0 & 1 \\
0 & 0 & -3 & 0 \\
\end{array}
\right).$$

\end{Ex}

\begin{Rem} If we delete the negative signs in the matrix $A^{(d)}$ the obtained matrix is called the Clement-matrix or Sylvester-Katz matrix. Its eigenvalues are $d,d-2,\dots ,-d$.

\end{Rem}

\begin{proof}[Proof of Theorems~\ref{Euler orientations, regular} and \ref{Euler orientations, non-regular}]
We only prove the regular case. The proof of the non-regular case is essentially the same.
The proof consists of the following steps.
First we show that the vector $(s_0,s_1,\dots ,s_d)$ is the right eigenvector of the matrix $A^{(d)}$ belonging to the eigenvalue $0$. From this and the lemma it follows that $Q_{(j)}(s_0,s_1,\dots ,s_d)=0$ if $j\neq 0$. 
From Theorem~\ref{orientations} we know that
$$F_G(x_0,\dots ,x_d)=\frac{1}{2^{e(G)}}\sum_{\mathcal{O}}\prod_{v\in V}Q_{(d_{\mathcal{O}}(v))}.$$
So evaluating at $\underline{s}$, most of the terms vanish and only the Eulerian orientations remain:
$$F_G(s_0,s_1,\dots ,s_d)=\frac{c_{\underline{0}}}{2^{e(G)}}Q_0(s_0,s_1,\dots ,s_d)^n,$$
where $c_{\underline{0}}$ is the number of Eulerian orientations. Finally, we show that $\underline{s}$ is normalized is such a way that 
$$\frac{1}{2^{e(G)}}Q_0(s_0,s_1,\dots ,s_d)^n=1,$$
and so $F_G(s_0,s_1,\dots ,s_d)=c_{\underline{0}}$.
\bigskip

One can check directly that $(s_0,s_1,\dots ,s_d)$ is the right eigenvector of the matrix $A^{(d)}$ belonging to the eigenvalue $0$. Alternatively, let $\underline{s'}=(s'_0,s'_1,\dots ,s'_d)$ and  $s'_0=1$. Using the equation $A^{(d)}\underline{s'}=\underline{0}$, equivalently equations $(d-k)s'_{k+1}-ks'_{k-1}=0$, we get that
$s'_k=0$ if $k$ is odd, and $s'_0=1$ implies $s'_2=\frac{1}{d-1}$, $s'_4=\frac{1}{d-1}\cdot \frac{3}{d-3}$ and in general
$$s'_{2t}=\frac{1\cdot 3\cdot \dots \cdot (2t-1)}{(d-1)\cdot (d-3)\cdot \dots \cdot (d-(2t-1))}.$$
Using that $(2t)!!:=1\cdot 3\cdot \dots \cdot (2t-1)=\frac{(2t)!}{2^tt!}$ we can further simplify it:
$$s'_{2t}=\frac{(2t)!!(d-2t)!!}{d!!}=\frac{\frac{(2t)!}{2^tt!}\frac{(d-2t)!}{2^{d/2-t}(d/2-t)!}}{\frac{d!}{2^{d/2}(d/2)!}}=\frac{\binom{d/2}{t}}{\binom{d}{2t}}.$$
Note that $s_{t}=\frac{\binom{d}{d/2}}{2^{d/2}}s'_t$. Next we evaluate $Q_{(0)}(\underline{s'})$.  We have a general formula for $Q_{(k)}$ which is particularly simple in case of $k=0$, namely, from Lemma~\ref{eigenvector} we have
$$Q_{(0)}(a_0,\dots ,a_d)=L((1+iz)^{d/2}(1-iz)^{d/2})=L((1+z^2)^{d/2})=\sum_{t=0}^{d/2}\binom{d/2}{t}a_{2t}.$$
Hence
$$Q_{(0)}(\underline{s'})=\sum_{t=0}^{d/2}\binom{d/2}{t}\frac{\binom{d/2}{t}}{\binom{d}{2t}}.$$
Observe that
$$\sum_{t=0}^{d/2}\binom{d/2}{t}\frac{\binom{d/2}{t}}{\binom{d}{2t}}=\frac{1}{\binom{d}{d/2}}\sum_{t=0}^{d/2}\binom{2t}{t}\binom{2(d/2-t)}{d/2-t}=\frac{1}{\binom{d}{d/2}}4^{d/2}=\frac{2^d}{\binom{d}{d/2}}.$$
Hence
$$F_G(s'_0,s_1,\dots ,s'_d)=\frac{c_{\underline{0}}}{2^{e(G)}}Q_0(s'_0,s_1,\dots ,s'_d)^n=\frac{2^{dn/2}}{{\binom{d}{d/2}}^n}c_{\underline{0}},$$
whence
$$F_G(s_0,s_1,\dots ,s_d)=c_{\underline{0}}.$$
\end{proof}

\begin{proof}[Proof of Theorems~\ref{half-graphs, regular} and \ref{half-graphs, non-regular}]
Again we only prove the regular case, the proof of the non-regular case is essentially the same.
For an even $d$ let $\textbf{e}_{d/2}$ be the vector $(0,\dots ,0,1,0,\dots ,0)$ with a $1$ at $d/2$-th coordinate. Then
$F_G(\textbf{e}_{d/2})$ is the number of $(d/2)$--regular subgraphs. We know that $F_G(\textbf{e}_{d/2})=F_G(\textbf{R}_{\pi/4}\textbf{e}_{d/2})$, so it is enough to show that the vector $\underline{c}=(c_0,c_1,\dots ,c_d)$ is exactly $\textbf{R}_{\pi/4}\textbf{e}_{d/2}$. In other words, we need to show that the $(d/2)$-th column vector of $\textbf{R}_{\pi/4}$ is $\underline{c}$. 

By putting together Lemma~\ref{rotation matrix rows} with the definition of the rotation matrix $\textbf{R}_{t}$ we get that the $r$-th element $(r=0,1,\dots ,d)$ of the $k$-th row is the coefficient of $x^r$ in the polynomial $(x\cos(t)-\sin(t))^k(x\sin(t)+\cos(t))^{d-k}$. So we need the the coefficient of $x^{d/2}$ for $t=\pi/4$. Then
$$(x\cos(\pi/4)-\sin(\pi/4))^k(x\sin(\pi/4)+\cos(\pi\/4))^{d-k}=\frac{1}{2^{d/2}}(x-1)^k(x+1)^{d-k}.$$
The coefficient of $x^{d/2}$ in $(x-1)^k(x+1)^{d-k}$ is
\begin{align*}
\sum_{j=0}^{d/2}(-1)^{k-j}\binom{k}{j}\binom{d-k}{d/2-j}&=\sum_{j=0}^{d/2}(-1)^{k-j}\frac{k!(d-k)!}{j!(k-j)!(d/2-j)!(d/2-k+j)!}\\
&=(-1)^k\frac{\binom{d}{d/2}}{\binom{d}{k}}\sum_{j=0}^{d/2}(-1)^j\binom{d/2}{j}\binom{d/2}{k-j}.
\end{align*}
Here $\sum_{j=0}^{d/2}(-1)^j\binom{d/2}{j}\binom{d/2}{k-j}$ is also the coefficent of $x^k$ in \\ $(1-x)^{d/2}(1+x)^{d/2}=(1-x^2)^{d/2}$ which is clearly $0$ if $k$ is odd, and $(-1)^{k/2}\binom{d/2}{k/2}$ if $k$ is even. So the coefficient of $x^{d/2}$ in $\frac{1}{2^{d/2}}(x-1)^k(x+1)^{d-k}$ is exactly the $c_k$ defined in Theorem~\ref{half-graphs, regular}.

\end{proof}

Now we are ready to prove Theorems~\ref{Schrijver-non-regular} and Corollary~\ref{Schrijver-regular}.

\begin{proof}[Proof of Theorem~\ref{Schrijver-non-regular} and Corollary~\ref{Schrijver-regular}]
In case of a $d$--regular graph we have
$$\varepsilon(G)=F_G(s_0,s_1,\dots ,s_d)\geq s_0^n+s_d^n=2 \left(\frac{\binom{d}{d/2}}{2^{d/2}}\right)^n.$$
In case of non-regular graphs, the only difference is that we have to substitute \\ $(s^{(d_v)}_0,s_1^{(d_v)},\dots ,s^{(d_v)}_{d_v})$ into $x^v_0,\dots ,x^v_{d_v}$ in $F_G(\underline{x})$.

\end{proof}

We can also prove Theorem~\ref{orientations and half-graphs, non-regular} and Corollary~\ref{orientations and half-graphs, regular}.

\begin{proof}[Proof of Theorem~\ref{orientations and half-graphs, non-regular} and Corollary~\ref{orientations and half-graphs, regular}]
This is clear from the fact that $|c_k|=s_k$ and 
$$\varepsilon(G)=F_G(s_0,\dots,s_d)\geq F_G(c_0,\dots,c_d)=h(G)$$
for a $d$--regular graph $G$ and similarly, $\varepsilon(G)=F_G(\underline{s})\geq F_G(\underline{c})=h(G)$ for a non-regular graph $G$. 
If $G$ is non-bipartite, then it contains and odd cycle and the contribution of this odd cycle to the sums shows that there cannot be equality.
It is also clear that if $G$ is a bipartite graph, then there is a bijection between half-graphs and the oriented edges going from one part to the other of the bipartite graph.
\end{proof}

\begin{Rem}
The vector $(s_0,s_1,\dots ,s_d)$ or equivalently $(s'_0,s'_1,\dots ,s'_d)$ has another specialty: the functions $\widehat{a_r}(t)$ are constant. Indeed,
$$\widehat{a_0}(t)=\sum_{k=0}^ds'_k\binom{d}{k}\cos(t)^k\sin(t)^{d-k}=\sum_{k=0}^{d/2}\binom{d/2}{k}(\cos(t)^2)^{k}(\sin(t)^2)^{d/2-k}=(\cos(t)^2+\sin(t)^2)^{d/2}=1.$$
Then using the formulas $\frac{d}{dt}\widehat{a_k}(t)=(d-k)\widehat{a_{k+1}}(t)-k\widehat{a_{k-1}}(t),$
we get that the other $\widehat{a_k}(t)$ functions are constant too.
\end{Rem}

\section{Orientations of $3$--regular graphs}

In this section we prove Theorem~\ref{sources and sinks}.

\begin{proof}[Proof of Theorem~\ref{sources and sinks}]
Let $H_G(y_{-3},y_{-1},y_1,y_3)=\sum_{\mathcal{O}}\prod_{v\in V}y_{d_{\mathcal{O}}(v)},$
where $d_{\mathcal{O}}(v)$ is the oriented degree of the vertex $v$. Then
$$H_G\left(\frac{1}{t},1,1,t\right)=\sum_{\mathcal{O}}t^{n_+(\mathcal{O})-n_-(\mathcal{O})}.$$
We know that 
$$\frac{1}{2^{3n/2}}H_G(y_{-3},y_{-1},y_1,y_3)=F_G(x_0,x_1,x_2,x_3)$$
for some $x_0,x_1,x_2,x_3$. In fact, we will show that there are $a$ and $b$ such that
$$\frac{1}{2^{3n/2}}H_G(y_{-3},y_{-1},y_1,y_3)=F_G(a,0,0,b)$$
and
$$a=\frac{1}{2}(t^{1/4}+t^{-1/4})\ \  \mbox{and}\ \  b=\frac{i}{2}(t^{1/4}-t^{-1/4}).$$ 
We know that
$$F_G(x_0,x_1,x_2,x_3)=\frac{1}{2^{e(G)}}H_G(Q_{(-3)},Q_{(-1)},Q_{(1)},Q_{(3)}),$$
and that $Q_{(3)}Q_{(-3)},Q_{(1)}Q_{(-1)},Q_{(3)}Q_{(-1)}^3+Q_{(-3)}Q_{(1)}^3$ generate $R^{\partial}$. This means that if we choose any $x_0,x_1,x_2,x_3$ for which $Q_{(3)}Q_{(-3)}=t\cdot 1/t=1,Q_{(1)}Q_{(-1)}=1$ and $Q_{(3)}Q_{(-1)}^3+Q_{(-3)}Q_{(1)}^3=t+1/t$, then we get the same $F_G(x_0,x_1,x_2,x_3)$. Note that
\begin{align*}
Q_{(3)}Q_{(-3)}=&x_0^2-6x_0x_2+9x_1^2-6x_1x_3+9x_2^2+x_3^2,\\
Q_{(3)}Q_{(-3)}=&x_0^2+2x_0x_2+x_1^2+2x_1x_3+x_2^2+x_3^2,\\
Q_{(3)}Q_{(-1)}^3+Q_{(-3)}Q_{(-1)}^3=&-6x_1^4-6x_2^4+2x_0^4-12x_1^2x_3^2+2x_3^4+48x_0x_1x_2x_3+\\
&12x_0^2x_1^2-12x_0^2x_2^2+48x_1x_2^2x_3+48x_0x_1^2x_2-16x_0x_2^3\\
&-12x_0^2x_3^2-16x_1^3x3+36x_1^2x_2^2+12x_2^2x_3^2\\
\end{align*}
The reason why it is enough to check $Q_{(3)}Q_{(-1)}^3+Q_{(-3)}Q_{(-1)}^3$ instead of $Q_{(3)}Q_{(-1)}^3$ and $Q_{(-3)}Q_{(-1)}^3$ is that these are conjugate pairs and $F_G(x_0,x_1,x_2,x_3)$ has real coefficients.
So if we choose $(x_0,x_1,x_2,x_3)=(a,0,0,b)$, then the first two equations reduce to $a^2+b^2=1$, the third one to $2a^4+2b^4-12a^2b^2=t+1/t$. It is easy to check that  the above $a$ and $b$ indeed satisfy these equations.          
\medskip

Since $G$ is connected, every subgraph different from the empty and the complete graph has a vertex of degree $1$ or $2$. Hence
$F_G(a,0,0,b)=a^n+b^n$. Hence
$$\frac{1}{2^{3n/2}}H_G\left(\frac{1}{t},1,1,t\right)=\left(\frac{1}{2}(t^{1/4}+t^{-1/4})\right)^n+\left(\frac{i}{2}(t^{1/4}-t^{-1/4})\right)^n.$$
Then
$$\frac{1}{2^{n/2-1}}H_G\left(\frac{1}{t},1,1,t\right)=(t^{1/4}+t^{-1/4})^n+(i(t^{1/4}-t^{-1/4}))^n.$$
By binomial theorem we have
\begin{align*}
(t^{1/4}+t^{-1/4})^n+(i(t^{1/4}-t^{-1/4}))^n&=\sum_{k=0}^n\binom{n}{k}(1+i^n(-1)^{n-k})t^{k/4-(n-k)/4}\\
&=\sum_{r=-n/2}^{n/2}\binom{n}{n/2+r}(1+i^n(-1)^{n/2+r})t^{-r/2}.
\end{align*}
Now observe that $1+i^n(-1)^{n/2+r}=1+(-1)^{n+r}=1+(-1)^r$, and so
$$(t^{1/4}+t^{-1/4})^n+(i(t^{1/4}-t^{-1/4}))^n=2\sum_{s=-\lfloor n/4\rfloor}^{\lfloor n/4 \rfloor}\binom{n}{n/2+2s}t^{-s}.$$
Hence
$$\sum \mathbb{P}(n_+(\mathcal{O})-n_-(\mathcal{O})=k)t^k=\frac{1}{2^{3n/2}}H_G(1/t,1,1,t)=\sum_{s=-\lfloor n/4\rfloor}^{\lfloor n/4 \rfloor}\frac{\binom{n}{n/2+2s}}{2^{n-1}}t^{-s}.$$
Now comparing the coefficent of $t^k$ we get the claim.

\end{proof}

\noindent \textbf{Acknowledgment.} The second author is very grateful to M. Chertkov for useful discussion and help with references.

\bibliographystyle{siamnodash}

\bibliography{counting_subgraphs_orientations}

\end{document}